\newcommand{\norm}[1]{\left\Vert#1\right\Vert}
\newcommand{\abs}[1]{\left\vert#1\right\vert}
\DeclareMathOperator*{\esssup}{ess\,sup}
\DeclareMathOperator*{\essinf}{ess\,inf }
\newcommand{\Dt}{\Delta}
\newcommand{\dv}{{\rm div}}
\newcommand{\rot}{\rm rot}
\newcommand{\real}{\mathbb{R}} 
\newcommand{\ds}{\displaystyle}      
\newcommand{\la}{\langle}
\newcommand{\ra}{\rangle}
\begin{document}

\title{Hemivariational Inequality for Navier-Stokes Equations: Existence,
Dependence and Optimal Control}


\titlerunning{Hemivariational Inequality for Navier-Stokes Equations}        

\author{H. Mahdioui         \and S. Ben Aadi \and K. Akhlil.
}


\institute{ \'{E}cole Nationale des Sciences Appliqu\'ees, Ibn Zohr. \\  Polydisciplinary faculty of Ouarzazate, Ibn Zohr}
           \date{Received: date / Accepted: date}

\maketitle

\begin{abstract}
In this paper we study existence, dependence and optimal control results concerning solutions to a class of hemivariational inequalities for stationary Navier-Stokes equations but without making use of the theory of pseudo-monotone operators. To do so, we consider a classical assumption, due to J. Rauch, which constrains  us to make a slight change on the definition of a solution. The Rauch assumption, although it insures the existence of a solution, does not allow the conclusion that the non-convex functional is locally Lipschitz. Moreover, two dependence results are proved, one with respect to changes of the boundary condition and the other with respect to the density of external forces. The later one will be used to prove the existence of an optimal control to the distributed parameter optimal control problem where the control is represented by the external forces.

\end{abstract}

\subclass{35Q30 \and 47H10 \and 49J20 \and  49J52  \and 49J53}

\keywords{Navier-Stokes equations  \and Hemivariational inequalities  \and Galerkin method  \and Optimal control  \and Nonconvex Optimization \and Subdifferential.}

\section{Introduction}
\label{intro}

This paper is devoted to the study of Navier-Stokes equations involving subdifferential boundary conditions but without making use of the theory of pseudo-monotone operators. We assume the nonslip boundary condition together with a Clarke subdifferential relation
between the pressure and the normal components of the velocity. Navier-Stokes equations together with this type of boundary condition model, in practice, the motion of an incompressible viscous fluid that, when pumped into the domain, can leave through the orifices on the boundary and, by a mechanism allowing the adjustment of the orifice's dimensions, the normal velocity on the boundary of the fluid is regulated to reduce the dynamic pressure.

Let $\mathscr O$ be a bounded simply connected domain in $\mathbb R^d$ with connected boundary $\partial\mathscr O$ of class $C^2$  ($d=2,\,3$). The stationary Navier-Stokes equations are described by the following system:
\begin{align}
 -\nu & \sum_{j=1}^d \frac{\partial^2 u_i}{\partial x_j^2} + \sum_{j=1}^du_j\frac{\partial u_i}{\partial x_j} + \frac{\partial p}{\partial x_i}  = f_i, \quad i=1,2,...,d \text{ in } \mathscr O , \label{p1}\\
 &\sum_{j=1}^d \frac{u_j}{x_j}=0 \text{ in } \mathscr O.\label{p2}
\end{align}where $u=\{u_i\}_{i=1}^d$ and $p$ are respectively the velocity and the pressure of the fluid. The external forces are represented by $f=\{f_i\}_{i=1}^d$ and the kinematic viscosity by the constant $\nu$. Using the standard Lamb formulation (\cite[Chapter I]{girault2012finite}), one can rewrite the equations \eqref{p1}-\eqref{p2} in an equivalent form involving the rotational operator and the dynamic pressure $\widetilde p:=p+\frac{1}{2}|u|^2$. The new formulation of the problem is then considered with the following boundary conditions:
\begin{equation}\label{boundary}
\widetilde p(z)\in\partial j(z,u_N(z)) \text{ and } u_\tau=0 \text{ on }\partial\mathscr O
\end{equation}
Here $u_N=u.n$ and $u_\tau=u-u_N\,n$ denote the normal and the tangential components of $u$ on the boundary $\partial\mathscr O$, $n$ being the unit outward normal vector on $\partial\mathscr O$. The multivalued mapping $\partial j$ denotes the Clarke subdifferential of a locally Lipschitz function $j(x,.)$.

In some important applications, but also in our present paper, the function $j$ can be expressed as 

\begin{equation}\label{j}
j(t)=\int_0^t\Theta(s)\,ds
\end{equation}for a locally bounded function $\Theta$ in $\mathbb R$ such that $\Theta(t\pm 0)$ exists fo all $t\in\mathbb R$. In this situation, we consider the following classical assumption introduced by J. Rauch \cite{rauch1977discontinuous} to study discontinous semilinear differential equations

\[
\displaystyle\esssup_{ ]-\infty,-t_{0}[ } \Theta(t) \leq 0 \leq \displaystyle\essinf _{ ]t_{0},+\infty[ } \Theta(t)
\]
This assumption will refer to us as \textit{Rauch assumption}. Geometrically, it describes the ultimate increase of the graph of the function $\Theta$. One of the most important advantage of this choice of $j$ is that it simplifies tremendously the calculation of the subdifferential $\partial j$. In fact, due to K. C. Chang \cite{chang1981variational}, the subdifferential of $j$ can be obtained by "filling in the gaps" in the discontinuous graph of $\Theta$. Under Rauch assumption, the resulting weak formulation of the problem \eqref{p1}-\eqref{boundary} is not a variational one but leads to the so-called \textit{hemivariational inequality}. When regularized with the help of Galerkin method the problem becomes a semilinear differential equation as discussed, in its simplest form, in the seminal work of J. Rauch \cite{rauch1977discontinuous}. This simple remark allows us to say that the hemivariational inequality can be seen, at least in our context, as a limit of a sequence of semilinear equations involving nonmonotone discontinuous functions.

It is fundamental to mention that, without any additional growth hypothesis on $j$, the Rauch assumption is sufficient to establish the existence of solutions to \eqref{p1}-\eqref{boundary}. Unfortunately, this condition does not make the functional  

\[
J(u)=\int_{\partial\mathscr O}j(u)\,d\sigma
\]locally Lipschitz or even finite on the whole space. Because of this reason, the Aubin-Clarke result giving the relation between the subdifferential of J and j can not be used. One strategy to encounter this problem is to modify slightly the definition of being a solution of \eqref{p1}-\eqref{boundary}.

It is worth to mention that the theory of hemivariational inequalities was introduced 
 the first time by Panagiotopoulos \cite{Pana81,Pana83,Pana89,Pana88,Panagiotopoulos1}, who by applying the generalized gradient of Clarke-Rockafellar \cite{Clar75,Clar83,Rock80} studied such variational-like expressions to discuss solutions of a class of  mechanical problems involving nonconvex and nonsmooth energy functionals. In the case of functions $j$ expressed as in \eqref{j}, hemivariational inequalities was extensively studied both in a mathematical and mechanical point of view, see  \cite{Pana89,Pana88,Pana91,Pana882,Mopana88,Panasta88,Panakol90,Panaban84} for more details.

The hemivariational inequalities for stationary and non-stationary Navier-Stokes equations were considered by many researchers in recent years. For convex functions $j(x,.)$, the problem has been studied by Chebotarev \cite{chebo92,chebo97,chebo03}. The boundary condition \eqref{boundary}, in the convex case, has been also considered for the Boussinesq equations  in \cite{chebo01} and in \cite{kono00} for its evolution counterpart. In all these papers the considered problems was formulated as variational inequalities. In the nonconvex case, the formulation of  \eqref{p1}-\eqref{boundary} is no longer a variational inequality but it leads to an hemivariational inequality. In the stationary case, the problem \eqref{p1}-\eqref{boundary} with nonconvex superpotentials $j$ was considered by Mig\'{o}rski and Ochal \cite{M05HE} Mig\'{o}rski \cite{mig04}, for non-Newtonian case see \cite{dudkalmig15}. In Orcliz spaces, hemivariational inequalities for Newtonian and Non-newtonian Navier-Stokes equations has been recently studied in \cite{migpac2018}, \cite{migpac19}. Hemivariational inequalities for generalized Newtonian fluids are recently extensively studied see \cite{dudkalmig17} and references therein, see also  \cite{migdud18} for evolutionary Oseen model for generalized Newtonian fluid. For an equilibrium problem approach to hemivariational inequalities for Navier-Stokes equations we refer to \cite{homancha19} and \cite{aadi18}. For different aspects about nonsmooth optimization in the context of Navier-Stokes system we refer to \cite{fang2019finite,fang16,kalita2015large,kalita2014attractors,M19F,M04he,MH,M19F,migorski2018evolutionary,SZ17Ev}.

The goal of this paper is threefold. We aim to
\begin{itemize}
 \item[(1)] show the existence of weak solutions to the hemivariational inequality corresponding to the problem \eqref{p1}-\eqref{boundary},
 \item[(2)] prove a dependence result of solutions with respect to the hemivariational part and to the density of the external forces,
 \item[(3)] formulate and study the distributed parameter optimal control where the control is represented by the density of the external forces.
\end{itemize} 

The paper is organized as follows. In section 2 we give the formulation of the stationary Navier-Stokes equations with a subdifferential boundary condition as an hemivariational inequality. We give a slight different definition to this problem to have a solution. This definition is so formulated to overcome the problem of the integrability and the local Lipschitzianity of $J$. We give in the end of this section an example illustrating the practicality of this model. In section 3, we first regularize the problem and then use Galerkin approximation. The existence of solutions to the regularized finite-dimentional problem is proven by using the Brouwer's fixed point theorem. In addition a weak precompactness result is obtained by the Dunford-Pettis theorem. In section 4, we prove the existence of a solution to our problem and we discuss why, in our opinion the question of uniqueness is difficult to answer even with a monotonicity assumption similar to the one in \cite{M05HE}. In section 5, we prove the dependence of the solution with respect to changes of the boundary condition by using an Aubin-Frankowski theorem. Finally in section 6, we first prove the dependence of solutions on external forces and use the result to prove the existence of an optimal control to a distributed parameter optimal control problem formulated by considering the external forces as controls.

\section{Problem statement}
\label{sec:2}
Let $\mathscr O$ a bounded simply connected domain in $\real^d$ with connected boundary $\partial \mathscr O$ of class $C^2$ ($d=2,3 $). We consider the following Navier-Stokes system:
\begin{equation}\label{e}
 -\nu \Dt u + (u.\nabla u) + \nabla p  = f, \quad \dv u = 0 \,
  \mbox{ in } \mathscr O.
\end{equation}
This system describes the flow of incompressible viscous fluid in the domain $\Omega$, subjected to the external forces $f=\{f_i\}_{i=1}^d$.  The unknown are the velocity $u =\{u_i\}_{i=1}^d$ and the pressure $p$ of the fluid . The positive constant $\nu$ is the kinematic viscosity of the fluid ($\nu  =\frac{1}{Re}$ where $Re$ stands for the Reynolds number). The nonlinear term $(u.\nabla)u$, called the convective term, is the symbolic notation of the vector $\sum_{j=1}^d u_j \frac{\partial u_i}{\partial x_j}$. The second condition, i.e $\dv\, u=0$, expresses the fact that the fluid is incompressible.

In order to give a variational-like formulation of (\ref{e}), we will use the approach developed by Chebotarev \cite{chebo92,chebo97,chebo01}, Konovalova \cite{kono00} and Alekseev and Smishliaev \cite{alekseev2001solvability}. By means of standard Lamb formulation \cite[Chapter I]{girault2012finite}, one obtain the following identities
\begin{equation}\label{lamb}
-\Dt u= \rot\,\rot\,\ u-\nabla\, \dv\,  u,  \end{equation}

\begin{equation}\label{lamb2}
(u.\nabla) u= \rot \,u \times u-\frac{1}{2} \nabla(u.u).
\end{equation}
Using the expressions \eqref{lamb}-\eqref{lamb2} and the incompressibility condition, the equation (\ref{e}) can be reformulated as follows
\begin{equation*}
 \nu \rot\,\rot\, u + \rot\, u\times u + \nabla \widetilde p  = f,
\end{equation*}
\begin{equation*}
 \quad \dv u = 0 \,
  \mbox{ in } \mathscr O .
\end{equation*}
where $\widetilde p=p+\frac{1}{2}\abs{u}^2$ is the total head of the fluid, or "total pressure" .

We suppose that, on the boundary $\partial\mathscr O$, the tangential components of the velocity vector are known and
without loss of generality we put them equal to zero (the nonslip condition):
\begin{equation}\label{eq2}
u_\tau:=u-u_N\,n=0 \text{ on } \partial\mathscr O,
\end{equation}
where $n$ is the unit outward normal on the boundary $\partial \mathscr O$ and $u_N=u.n$ denotes the normal component of the vector $u$. Moreover, we assume the following subdifferential boundary condition:
\begin{equation}\label{eq4}
\widetilde p(z) \in \partial j(z,u_N(z)) \text{ for } z\in \partial\mathscr O
\end{equation}where $\partial j (\xi)$ is the generalized gradient of   $j$ at $\xi$ and is given by 
\begin{center}
$\partial j(\xi)= \{ \xi^*\in V^* : j^0(\xi;h) \geq \la \xi^*,h\ra_{V^*\times V} \text{ for all } h\in V \}$,
\end{center}
$j^0(\xi;h)$ is the generalized derivative of a locally Lipschitz function $j$ at $\xi\in V$ in the direction $h\in V$ defined by:
\begin{center}
$j^0(\xi;h)=\ds\limsup_{\nu \rightarrow \xi, \lambda \downarrow
0} \frac{j(\nu+\lambda h)-j(\nu)}{\lambda }.$
\end{center}

In order to give the weak formulation of the problem (\ref{eq2})-(\ref{eq4}), we introduce the following functional spaces:
\begin{equation}
\mathcal{W}= \{ u \in \mathcal{C}^\infty(\mathscr O;\real^d) : \dv u=0 \text{ in } \mathscr O, u_\tau=0 \text{ on } \partial\mathscr O \}. 
\end{equation}
Let us denote by $\mathcal{V}$ and $\mathcal{H}$ the closure of $\mathcal{W}$ in the norms of $H^1(\mathscr O;\real^d)$ and $L^2(\mathscr O;\real^d)$, respectively. 
We define the operators $\mathscr A: \mathcal{V} \to \mathcal{V}^* $ and $\mathscr B[.]: \mathcal{V}\times \mathcal{V} \to \mathcal{V}^*$ with $\mathscr B[u]=\mathscr B(u,u)$ by
\begin{align*}
\la \mathscr A u,v\ra=&  \nu \int_{\mathscr O} \rot\, u.\rot\, v \,{\rm d}\lambda(x) \\ \la \mathscr B(u,v),w\ra= & \int_{\mathscr O} (\rot\, u\times v).w\,{\rm d}\lambda(x) 
\end{align*}
for $u, v, w \in \mathcal{V}$.

We multiply the equation of
motion (\ref{eq2}) by $v \in\mathcal{V}$ and apply the Gauss divergence theorem, we have:
\begin{equation}\label{eq6}
\la \mathscr Au+\mathscr B[u],v\ra+\ds\int_{\partial\mathscr O} \widetilde{p}(z)\,v_N(z)  \,d\sigma(z)=\la f,v\ra,
\end{equation}

From the relation (\ref{eq4}), by using the definition of the Clarke subdifferential, we have
\begin{equation}\label{eq7}
\ds\int_{\partial \mathscr O} \widetilde{p}(z)\, v_N(z) \,d\sigma(z) \leq \int_{\partial\mathscr O} j^0(z,u_N(z);v_N(z))\, d\sigma(z).
\end{equation}
The two relation (\ref{eq6})-(\ref{eq7}) yield to the following weak formulation 
\begin{equation*}
\text{(HVI)} \left\{ 
\begin{array}{ll}
\text{ Find } u\in \mathcal{V} \text{ such that }\\ \la \mathscr Au+\mathscr B[u],v\ra+\ds\int_{\partial\mathscr O} j^0(z,u_N(z);v_N(z))\, d\sigma(z) \geq \la f,v\ra, \text{ for every } v\in \mathcal{V},

\end{array}
\right.
\end{equation*}
the equation above is called an hemivariational inequality.

We have already mentioned in the introduction that the Rauch assumption is not sufficient to make the functional $J$ locally Lipchitz or even finite in the whole space $\mathcal V$. Because of this reason, a slight modified definition of being a solution should be adopted. Define the space $\widetilde{\mathcal V}$ as follows:
\[
\widetilde{\mathcal V}=\{u\in\mathcal V : \,\, u_N=\gamma(u).n\in L^\infty(\partial\mathscr O;\mathbb R)\}
\]where $\gamma$ is the trace operator from $\mathcal V$ in $L^2(\partial\mathscr O;\mathbb R^d)$. It is easy to prove that $\widetilde{\mathcal V}$ is dense in $\mathcal V$ for the weak topology. Now, we are able to give what we mean by a solution of the problem $(HVI)$.

\begin{definition}
A pair $(u,\xi)\in\mathcal V\times L^1(\partial\mathscr O,\,\mathbb R)$ is said to be solution of (HVI) if the following two relations are satisfied  

\begin{equation}\label{eqdef}
\left\{ 
\begin{array}{ll}
\la \mathscr Au+\mathscr B[u],v\ra+\ds\int_{\partial\mathscr O} \xi(z)\,v_N(z)\, d\sigma(z) = \la f,v\ra, \text{ for every } v\in \widetilde{\mathcal V}\\
~\\
\xi(z)\in\partial j(z;u_N(z)),\quad\text{for a.e. } z\in \partial \mathscr O.

\end{array}
\right.
\end{equation}

\end{definition}

Let us introduce the following operator $\mathscr E: L^1(\partial\mathscr O,\,\mathbb R)\rightarrow \widetilde{\mathcal V}^*$  defined by 
\[
\la \mathscr E(\xi),\,v\ra_{\mathcal V}=\int_{\partial\mathscr O} \xi(z)\,v_N(z)\, d\sigma(z),\quad \forall v\in \widetilde{\mathcal V}.
\]

In order to justify this definition, let us observe that for any $\xi\in L^1(\partial\mathscr O,\,\mathbb R^d)$ there may correspond no more that one linear continuous functional  $\mathscr E(\xi)\in\mathcal V^*$ with the property that 
 
\[
\la \mathscr E(\xi),\,v\ra_{\mathcal V}=\int_{\partial\mathscr O} \xi(z)\,v_N(z)\, d\sigma(z),\quad \forall v\in \widetilde{\mathcal V}
\]

This fact is a consequence of the density of the space $\widetilde{\mathcal V}$ in $\mathcal V$. Accordingly, if $\mathscr E(\xi)\in\mathcal V^*$, then for each $v\in \mathcal V$ the value $\la \mathscr E(\xi),\,v\ra_{\mathcal V}$ is determined uniquely by $\xi$. The equation \eqref{eqdef} can be written then as
\[
\la \mathscr Au+\mathscr B[u]-f,v\ra+\la \mathscr E(\xi),\,v\ra_{\mathcal V}=0,
\]or more compactly as 
\[
\Lambda(u,\xi)=f \text{ for all }v\in\mathcal V.
\]
For simplicity and if no ambiguity occurs, we write always our problem as
\[
\la \mathscr Au+\mathscr B[u],v\ra+\int_{\partial\mathscr O} \xi(z)\,v_N(z)\, d\sigma(z)=\la f,v\ra
\] for every $v\in\widetilde{\mathcal V}$.

The above procedure has been extensively used by Naniewicz \cite{nani94,nani95} to study hemivariational inequalities with directional growth conditions. Such non standard growth conditions give arise to problems involving functionals $J$ which are not locally Liptschitz in the whole space.

In the following remark we will highlight the fact that there is an equivalence, in some sense, between the Navier-Stokes system (\ref{eq2})-(\ref{eq4}) and the hemivariational inequality \eqref{eqdef}.
 
\begin{remark}
It's clear that the hemivariational inequality \eqref{eqdef} can be derived from (\ref{eq2})-(\ref{eq4}). Now we show that, in some sense, the converse also holds true. Let $(u,\xi)\in\mathcal V\times L^1(\partial\mathscr O;\mathbb R)$ be a solution to the problem \eqref{eqdef}, then by construction of $\mathcal{V}$, we have $\dv u=0$  and  $u_\tau=0$ on $\partial\mathscr O$. Now, let us take an arbitrary element $w$ in $\mathcal V\cap C^\infty_0(\mathscr O;\real^d)$, then also $w_N=0$ and one obtains that 
$\langle  \mathscr Au +\mathscr B[u], w\rangle = \langle f, w\rangle $. Note $\tilde f_u=f-\mathscr A u-\mathscr B[u]$, and by the density of $\mathcal V\cap C^\infty_0(\mathscr O;\real^d)$ in $\mathcal V$ we can write
\[
\la \tilde f_u,w\ra=0, \quad\text{for all }w\in \mathcal V 
\]
From Proposition 1.1 in Chapter
I of Temam \cite{Temam} it follows that there exist a distribution $h$ such that $\tilde f_u=\nabla h$. As a consequence we have
\[
\mathscr Au +\mathscr B[u]+ \nabla h = f
\] which, by multiplying by $v$ and integrating by
 parts over $\mathscr O$, implies 
\[
\langle \mathscr Au + \mathscr B[u], v\rangle + \int_{\partial\mathscr O} h(z)v_N(z) d\sigma(z) = \langle f, v\rangle
\]
Comparing this equality with the one in \eqref{eqdef} entails
\[
\int_{\partial\mathscr O}[h(z) - \xi(z)]v_N(z)] d\sigma(z)
= 0,\quad \,\forall \,v \in V.
\]
As $v$ is arbitrary, one can conclude that $h\in L^1(\partial\mathscr O,\mathbb R)$ and $h=\xi\in\partial j(z,u(z))$ a.e. This shows the subdifferential condition (\ref{eq4}).
\end{remark}

The following example shows the practicality of our framework
\begin{example}

The subdifferential condition appearing in the problem (\ref{eq2})-(\ref{eq4}) refers, in practice, to an artificial behaviour of the flow of the fluid through the boundary $\partial\mathscr O$. The fluid pumped into $\mathscr O$ can leave the domain through the orifices on the boundary. By a mechanism allowing the adjustment of the orifices dimensions, the normal velocity on the boundary of the fluid is regulated to reduce the dynamic pressure on $\partial\mathscr O$.

We consider the boundary condition (\ref{eq4}) by given real numbers $a$ and $b$ such that
 $ 0\leq a \leq b$. The locally Lipschitz function $j : \partial\mathscr O \times \real \rightarrow \real $
is defined by :
\begin{equation*}
 j(x,s) = \left\{
\begin{array}{lll}
\frac{\widetilde{p} }{2(b-a)} (s-a)^2  &\text{ if }  0 \leq s < b ; \\
 \frac{\widetilde{p}}{2  }(b-a) & \text{ if }  s \geq b.
\end{array}
\right.
\end{equation*}
For $x\in \partial\mathscr O$, we have:
\begin{equation*}
\partial j(x,s) = \left\{
\begin{array}{lll}
\frac{\widetilde{p} }{b-a} (s-a)  &\text{ if }  0 \leq s < b ; \\
\left[0,\widetilde{p}\right]  & \text{ if }  s= b ; \\
 0 & \text{ if }  s > b.
\end{array}
\right.
\end{equation*} 
The condition $u_N> 0$ refers to the fact that there is a flow through $\partial\mathscr O$. The boundary condition $u_N=0$ means that there is no flow across the boundary. If $u_N\in (0, b)$, the orifices allow the fluid to infiltrate outside the tube. When the velocity of the fluid increases, the total pressure is a linear function which takes its values between $ 0 $ and $ \widetilde{p} $. If $ u_N $ reaches the value $ b $, a mechanism opens the holes more widely and allows the fluid to pass to the outside. As a result, the pressure drops to $ 0 $. Finally, its worth to mention that the dependence of $j$ on the space variable traduces the fact that the subdifferential boundary condition can possibly take different values on the parts of $ \partial\mathscr O $. For other examples, see \cite[ Example 18]{M05HE} and \cite[Remark 2]{M07HE}.
\end{example}
\section{Regularized Problem}

In the forthcoming study of the problem \eqref{eqdef} we restrict ourselves to superpotentials $j$ which are independent of $z$.

Let $\Theta \in L^\infty_{loc}(\real)$. For $\mu>0$ and $t\in \real$, we define:
\begin{equation*}
\underline{\Theta}_\mu(t)=\displaystyle\essinf_{\abs{t-s}\leq\mu}\Theta(s),\quad  \quad \overline{\Theta}_\mu(t)=\displaystyle\esssup_{\abs{t-s}\leq\mu}\Theta(s).
\end{equation*}
For a fixed $t\in \real$, the functions $\underline{\Theta}_\mu$ and $\overline{\Theta}_\mu$ are decreasing and increasing in $\mu$, respectively. Let 
\begin{align*}
\underline{\Theta}(t)=\lim_{\mu \to 0^+} \underline{\Theta}_\mu(t), \quad\quad \overline{\Theta}(t)=\lim_{\mu \to 0^+}\overline{\Theta}_\mu(t),
\end{align*}
and let $\widehat{\Theta}(t):\real \to 2^\real$ be a multifunction defined by
\begin{equation*}
\widehat{\Theta}(t)=\left[ \underline{\Theta}(t),\overline{\Theta}(t) \right]
\end{equation*}
From Chang \cite{chang1981variational} we know that a locally Lipschitz
function $j:\real \to \real$ can be determined up to an additive constant by the relation $$j(t)=\int_0^t \Theta(s)\,d \lambda(s) $$ such that $\partial j(t) \subset \hat{\Theta}(t)$ for all $t\in\real$. If moreover, the limits $\Theta(t\pm 0)$ exist for every $t\in\real$, then $\partial j(t) = \widehat{\Theta}(t)$.
\begin{remark}
Here for the existence theory an abstract regularization procedure by convolution is used. Such a regularization procedure can be modified in order to get approximations of locally Lipschitz function that can be treated numerically; see \cite{ovg14}.
\end{remark}
Now, we consider the mollifier 
\[\mathfrak h\in C_0^\infty(-1,1), \mathfrak h\geq 0 \text{ with } \ds\int_{-\infty}^{+\infty} \mathfrak h(s)\, {\rm d}\lambda(s)=1
\] and let

\[\Theta_\varepsilon =\mathfrak h_\varepsilon * \Theta
\text{ with } \mathfrak h_\varepsilon(s)=\frac{1}{\varepsilon}\mathfrak h(\frac{s}{\varepsilon})
\]where $*$ denotes the  convolution product and $0<\varepsilon<\varepsilon_0$. Thus the regularized problem becomes:
\begin{equation*}
(\mathscr P_\varepsilon) \left\{ 
\begin{array}{ll}
\text{ Find } u \in \mathcal V \text{ such that:  for all } v\in \widetilde{\mathcal V}\\
\ds \la \mathscr Au+\mathscr B[u],v\ra+\int_{\partial{\mathscr O}} \Theta_\varepsilon(u_N)\,v_N\, d\sigma =\la f,v\ra. \end{array}
\right.
\end{equation*}

Now and in order to define the corresponding finite dimensional problem $(\mathscr P_\varepsilon)$, we consider a Galerkin basis of $\widetilde{\mathcal V}$ and let $\mathcal V_n$ be the resulting n-dimensional subspace. 
This problem reads: \\
\begin{equation*}
(\mathscr P^n_\varepsilon) \left\{ 
\begin{array}{ll}
\text{ Find } u^{\varepsilon_n} \in \mathcal V_n \text{ such that:  for all } v\in \mathcal V_n\\
\ds \la \mathscr Au^{\varepsilon_n}+\mathscr B[u^{\varepsilon_n}],v\ra+\int_{\partial{\mathscr O}} \Theta_{\varepsilon_n}(u_N^{\varepsilon_n})\,v_N d\sigma =\la f,v\ra .
\end{array}
\right.
\end{equation*}

For the existence of solutions we will need the following hypothesis $H(\Theta)$:

\begin{itemize}
\item[(1) ](Chang assumption) $\Theta \in L_{loc}^{\infty}(\real),\, \Theta(t \pm 0)\text{ exists for any } t \in \real.$
\item[(2) ] (Rauch assumption) there is  $t_{0} \in
\real$ such that:
\[
\displaystyle\esssup_{ ]-\infty,-t_{0}[ } \Theta(t) \leq 0 \leq \displaystyle\essinf _{ ]t_{0},+\infty[ } \Theta(t)
\]
\end{itemize}

\begin{remark}
If one assume more generally that

\begin{equation}\label{eqdefalpha}
\displaystyle\esssup_{ ]-\infty,-t_{0}[ } \Theta(t) \leq \alpha \leq \displaystyle\essinf _{ ]t_{0},+\infty[ } \Theta(t)
\end{equation}
for some real number $\alpha$, it is possible to come back to the situation where the Rauch assumption is imposed by simply replacing $\Theta$ by $\Theta -\alpha$ and $f$ by $f-\alpha$. In fact if we assume \eqref{eqdefalpha} the problem \eqref{eqdef} is equivalent to 

\begin{equation*}
\left\{ 
\begin{array}{ll}
\la \mathscr Au+\mathscr B[u],v\ra+\ds\int_{\partial\mathscr O} (\xi(z)-\alpha)\,v_N(z)\, {\rm d}\sigma(z) = \la f-\alpha,v\ra, \text{ for every } v\in \widetilde{\mathcal V}\\
~\\
\xi(z)-\alpha\in\partial j(z;u_N(z))-\alpha=\partial \left[j(z;u_N(z))-\alpha u_N(z)\right],\quad\text{for a.e. }z\in\partial\mathscr O

\end{array}
\right.
\end{equation*}
Let us note $\tilde f=f-\alpha$ and $\widetilde\Theta=\Theta-\alpha$. Thus, $\tilde j(.)=\int_0^{.}\widetilde\Theta(s)\,{\rm d}\lambda(s)$ and the problem \eqref{eqdef} under \eqref{eqdefalpha} become to find $(u,\eta)$ such that 

\begin{equation*}
\left\{ 
\begin{array}{ll}
\la \mathscr Au+\mathscr B[u],v\ra+\ds\int_{\partial\mathscr O} \eta\,v_N(z)\, d\sigma(z) = \la \tilde f,v\ra, \text{ for every } v\in \widetilde{\mathcal V}\\
~\\
\eta\in\partial \tilde j(z;u_N(z)),\quad\text{for a.e. }z\in\partial\mathscr O

\end{array}
\right.
\end{equation*}where $\widetilde\Theta$ fulfill the Rauch assumption. This means that, without loss of generality, we can always consider the initial Rauch assumption (with $\alpha=0$).
\end{remark}

\begin{lemma}\label{lem1}
Suppose that $H(\Theta)$ holds. Then we can determine $a,b > 0$ such that for every $u\in\mathcal V$
\begin{equation}\label{eq14}
\int_{\partial{\mathscr O}} \Theta_{\varepsilon}(u_N(z))u_N(z) \,{\rm d}\sigma(z) \geq -ab \,\sigma(\partial{\mathscr O}).
\end{equation}
\end{lemma}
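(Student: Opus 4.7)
The plan is to split the boundary integral according to the sign behaviour of the regularized function $\Theta_\varepsilon$ that is inherited, via the mollification, from the Rauch assumption, and then bound the remaining ``transition'' part by a pointwise constant. Two natural candidates for the constants are
\[
a := t_0 + \varepsilon_0, \qquad b := \esssup_{|t|\leq t_0 + 2\varepsilon_0}|\Theta(t)|,
\]
both finite: $a$ because $t_0$ and $\varepsilon_0$ are fixed, and $b$ because $\Theta \in L^\infty_{loc}(\mathbb R)$ by hypothesis $H(\Theta)(1)$.

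First I would check the outer regions. For $t > a$ and any $s \in (-\varepsilon,\varepsilon) \subset (-\varepsilon_0,\varepsilon_0)$ one has $t-s > t_0 + \varepsilon_0 - \varepsilon > t_0$, so by the Rauch assumption $\Theta(t-s) \geq 0$ for a.e.\ such $s$; since $\mathfrak h_\varepsilon \geq 0$ and has integral $1$, this yields $\Theta_\varepsilon(t) \geq 0$, and hence $\Theta_\varepsilon(t)\,t \geq 0$. The symmetric argument for $t < -a$ gives $\Theta_\varepsilon(t) \leq 0$, so the product $\Theta_\varepsilon(t)\,t$ is again nonnegative. Applied pointwise to $u_N(z)$ this shows
\[
\int_{\{z\in\partial\mathscr O\,:\,|u_N(z)|>a\}} \Theta_\varepsilon(u_N(z))\,u_N(z)\,d\sigma(z) \geq 0
\]
for every $u\in\mathcal V$.

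Next I would handle the transition region $\{|u_N(z)| \leq a\}$. For $|t|\leq a$ and $|s| \leq \varepsilon \leq \varepsilon_0$, one has $|t-s| \leq t_0 + 2\varepsilon_0$, hence
\[
|\Theta_\varepsilon(t)| \leq \int_{-\varepsilon}^{\varepsilon} \mathfrak h_\varepsilon(s)\,|\Theta(t-s)|\,d\lambda(s) \leq b,
\]
so that $|\Theta_\varepsilon(u_N(z))\,u_N(z)| \leq a b$ pointwise on this set, and therefore
\[
\int_{\{|u_N|\leq a\}}\Theta_\varepsilon(u_N(z))\,u_N(z)\,d\sigma(z) \geq -a b\,\sigma(\partial\mathscr O).
\]
Adding the two pieces gives the claimed lower bound \eqref{eq14}, with $a,b$ independent of $u$ and of $\varepsilon\in(0,\varepsilon_0)$.

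No step is really an obstacle; the only point that needs attention is the uniformity of $b$ in $\varepsilon$, which is why I would fix the thickened interval $[-t_0-2\varepsilon_0,\, t_0+2\varepsilon_0]$ once and for all (using $\varepsilon_0$ rather than the varying $\varepsilon$) when invoking the local essential boundedness of $\Theta$. This ensures the estimate holds with the same constants throughout the approximation scheme and, in particular, remains usable when passing to the limit in later sections.
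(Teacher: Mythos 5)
Your proof is correct and follows essentially the same strategy as the paper: establish that $\Theta_\varepsilon$ inherits the sign conditions outside $[-a,a]$ and a uniform bound $b$ on $[-a,a]$ from the Rauch and local-boundedness assumptions, then split the boundary integral over $\{|u_N|>a\}$ (nonnegative integrand) and $\{|u_N|\le a\}$ (bounded below by $-ab\,\sigma(\partial\mathscr O)$). Your version is in fact tidier than the paper's, since you exhibit $a=t_0+\varepsilon_0$ and $b=\esssup_{|t|\le t_0+2\varepsilon_0}|\Theta(t)|$ explicitly and note their uniformity in $\varepsilon\in(0,\varepsilon_0)$, whereas the paper obtains the same constants through a less transparent sup/inf enlargement argument.
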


\begin{proof}
From the hypothesis $H(\Theta)$ we obtain that
\begin{equation*}
\Theta_\varepsilon (\xi)=( \mathfrak h \star \Theta)(\xi)=\int_{-\varepsilon}^{+\varepsilon}\Theta(\xi-t) \mathfrak h_\varepsilon(t){\rm d}\lambda(t) \leq \displaystyle\esssup_{\abs{t}\leq \varepsilon} \Theta(\xi-t)\end{equation*} and analogously
\begin{equation*}\displaystyle\essinf_{\abs{t}\leq \varepsilon} \Theta(\xi-t) \leq \Theta_\varepsilon(\xi)
\end{equation*}
In the above two inequalities we set $x = \xi-t, \abs{x-\xi}\leq \varepsilon$ and enlarge the bounds for $-\infty <x\leq \varepsilon +\xi$ and $\xi-\varepsilon \leq x< \infty$, respectively. Then the supremum and
the infimum for $\xi \in (-\infty,-\xi_1)$ and $\xi\in (+\xi_1,+\infty)$, respectively are formed and the bounds are enlarged by replacing $\varepsilon +\xi$ by $1 -\xi_1$ and $\xi-\varepsilon$ by $\xi_1 - 1 (\varepsilon < 1)$;
we obtain from $\text{H}(\Theta)$ that there exists $\xi\in \real$ such that
\begin{equation*}
\sup_{(-\infty,-\xi)} \Theta_\varepsilon(\xi_1) \leq 0\leq \inf_{(+\xi,+\infty)} \Theta_\varepsilon(\xi_1).
\end{equation*}
Thus we can determine $a> 0$ and $b > 0$ such that

\begin{equation*}
\left\{ 
\begin{array}{ll}
\Theta_\varepsilon(\xi)\geq 0, & \text{ if }\xi >a, \\
\Theta_\varepsilon(\xi)\leq 0, & \text{ if }\xi <-a,\\
\abs{\Theta_\varepsilon(\xi)}\leq b, & \text{ if }\abs{\xi}\leq a,
\end{array}
\right.
\end{equation*}and may write
\begin{align*}
\ds\int_{\partial{\mathscr O}} \Theta_{\varepsilon_n}(u_N(z))u_N(z)\, {\rm d}\sigma(z)=&\ds\int_{\abs{u_N(z)}>a} \Theta_{\varepsilon_n}(u_N(z))u_N(z)\,{\rm d}\,\sigma(z)\\
&\qquad+\int_{\abs{u_N(z)}\leq a} \Theta_{\varepsilon_n}(u_N(z))u_N(z)\,{\rm d}\,\sigma \\ 
&\geq \int_{\abs{u_N(z)}\leq a} \Theta_{\varepsilon_n}(u_N(z))u_N(z)\,{\rm d}\,\sigma\\
& \geq - ab\, \sigma(\partial{\mathscr O}).
\end{align*}
\end{proof}
\begin{lemma}\label{lem.est.theta}
Suppose $\Theta\in L^\infty_{\mathrm{loc}}(\mathbb R)$. Then for every $t,t'\in\mathbb R$ we have
\begin{equation*}
|\Theta_\varepsilon(t)-\Theta_\varepsilon(t')|
\leq \esssup_{s\in]t\wedge t'-\varepsilon,t\vee t'+\varepsilon[}|\Theta(s)|\,\|\mathfrak h'\|_{\infty}\,|t-t'|\,\left(t\vee t'-t\wedge t'+2\varepsilon\right)
\end{equation*}
where $\|\mathfrak h'\|_{\infty}:=\|\mathfrak h'\|_{C_c^\infty(]-\varepsilon,\varepsilon[)}$.

\end{lemma}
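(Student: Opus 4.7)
The plan is to expand both values of $\Theta_\varepsilon$ as convolution integrals and then exploit smoothness of the mollifier together with local boundedness of $\Theta$. More precisely, first I would write
\[
\Theta_\varepsilon(t)-\Theta_\varepsilon(t') \;=\; \int_{\mathbb R}\Theta(s)\,\bigl[\mathfrak h_\varepsilon(t-s)-\mathfrak h_\varepsilon(t'-s)\bigr]\,d\lambda(s).
\]
The first key observation is a support identification: because $\mathfrak h_\varepsilon$ is supported in $(-\varepsilon,\varepsilon)$, the two shifted kernels $s\mapsto\mathfrak h_\varepsilon(t-s)$ and $s\mapsto\mathfrak h_\varepsilon(t'-s)$ vanish outside $]t\wedge t'-\varepsilon,\,t\vee t'+\varepsilon[$, and the bracket therefore vanishes outside this interval. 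Its length is exactly $t\vee t'-t\wedge t'+2\varepsilon$, which is the last factor in the claimed bound.

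Next I would estimate the bracketed difference by the mean value theorem, using that $\mathfrak h_\varepsilon\in C_c^\infty(]-\varepsilon,\varepsilon[)$: for every $s$,
\[
\bigl|\mathfrak h_\varepsilon(t-s)-\mathfrak h_\varepsilon(t'-s)\bigr|\;\leq\;\|\mathfrak h'\|_\infty\,|t-t'|,
\]
where the factor $\|\mathfrak h'\|_\infty$ is understood as in the statement (the $\varepsilon$-scaling of the mollifier being absorbed in that notation, so no explicit dilation relation needs to be unwound). On the interval identified above, $|\Theta(s)|$ is bounded a.e. by $\esssup_{s\in]t\wedge t'-\varepsilon,\,t\vee t'+\varepsilon[}|\Theta(s)|$, which is finite by the hypothesis $\Theta\in L^\infty_{\mathrm{loc}}(\mathbb R)$.

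Finally I would combine the three bounds, pulling the uniform pointwise bound on $\Theta$ out of the integral, the Lipschitz constant $\|\mathfrak h'\|_\infty|t-t'|$ out of the bracket, and integrating the characteristic function of the support interval to produce the length factor $t\vee t'-t\wedge t'+2\varepsilon$. This yields exactly the stated inequality. The argument is entirely routine once the support restriction is noted; the only mildly delicate point is reading the convention for $\|\mathfrak h'\|_\infty$ consistently so that the bound comes out independent of the splitting of powers of $\varepsilon$ between the kernel and its norm — there is no genuine analytical obstacle.
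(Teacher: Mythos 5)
Your argument is correct and follows essentially the same route as the paper's own proof: expand the difference as a convolution integral, restrict to the support interval $]t\wedge t'-\varepsilon,\,t\vee t'+\varepsilon[$, apply the mean value theorem to the shifted kernels, and pull out the essential supremum of $|\Theta|$ and the length of the interval. The notational caveat you flag about $\|\mathfrak h'\|_\infty$ versus $\|\mathfrak h'_\varepsilon\|_\infty$ (which differ by the dilation factor $\varepsilon^{-2}$) is present in the paper as well, so your reading is consistent with the source.
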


\begin{proof}Let $t,\,t'\in\mathbb R$, we have the following estimates

\begin{align*}
|\Theta_\varepsilon(t)-\Theta_\varepsilon(t')|\leq&\ds\int_{\mathbb R}|\Theta(s)|\,|\mathfrak{h}_{\varepsilon}(t-s)-\mathfrak h_\varepsilon(t'-s)| {\rm d}\lambda(s)\\
& \leq \int_{t\wedge t'-\varepsilon}^{t\vee t'+\varepsilon}|\Theta(s)|\,|\mathfrak{h}_{\varepsilon}(t-s)-\mathfrak h_\varepsilon(t'-s)|\,{\rm d}\lambda(s) \\
&\leq\esssup_{s\in]t\wedge t'-\varepsilon,t\vee t'+\varepsilon[}|\Theta(s)|\,\int_{t\wedge t'-\varepsilon}^{t\vee t'+\varepsilon}|\mathfrak{h}_{\varepsilon}(t-s)-\mathfrak h_\varepsilon(t'-s)|\,{\rm d}\lambda(s).
\end{align*}

By mean value theorem, there exists $c\in ]t\wedge t'-\varepsilon,t\vee t'+\varepsilon[$ such that 
\[
\mathfrak{h}_{\varepsilon}(t-s)-\mathfrak h_\varepsilon(t'-s)=\mathfrak h'_\varepsilon(c)(t-t')
\]which give
\begin{align*}
|\Theta_\varepsilon(t)-\Theta_\varepsilon(t')|
&\leq\esssup_{s\in]t\wedge t'-\varepsilon,t\vee t'+\varepsilon[}|\Theta(s)|\,\int_{t\wedge t'-\varepsilon}^{t\vee t'+\varepsilon}|\mathfrak h'_\varepsilon(c)|\,|t-t'|\,{\rm d}\lambda(s)\\
&\leq \esssup_{s\in]t\wedge t'-\varepsilon,t\vee t'+\varepsilon[}|\Theta(s)|\,\|\mathfrak h'_\varepsilon\|_{\infty}\,|t-t'|\,\left(t\vee t'-t\wedge t'+2\varepsilon\right)
\end{align*}

\end{proof}

\begin{lemma}\label{lemK}
The operator  $\mathscr K_\varepsilon:\mathcal V_n\rightarrow\mathcal V_n^\star$ given by
\[
\la\mathscr K_{\varepsilon}(v),w\ra=\int_{\partial\mathscr O}\Theta_\varepsilon(v_N)\,w_N\,{\rm d}\,\sigma,\quad u,\,w\in \mathcal V_n
\] is weakly continuous.
\end{lemma}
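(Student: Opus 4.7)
Since $\mathcal V_n$ is finite-dimensional, weak and strong convergence coincide on both $\mathcal V_n$ and $\mathcal V_n^*$, so weak continuity of $\mathscr K_\varepsilon$ is equivalent to norm continuity. The plan is thus to take an arbitrary sequence $v_k\to v$ in $\mathcal V_n$ and bound $|\langle \mathscr K_\varepsilon(v_k)-\mathscr K_\varepsilon(v),w\rangle|$ uniformly over $w$ in the unit ball of $\mathcal V_n$, using the Lipschitz-type estimate supplied by Lemma \ref{lem.est.theta} together with continuity of the trace operator from $\mathcal V$ to $L^2(\partial\mathscr O;\mathbb R^d)$.

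The key step is to generate a Lipschitz constant for $\Theta_\varepsilon$ that is uniform in $k$ along the sequence of normal traces $v_{k,N}$. Since the Galerkin basis is chosen inside $\widetilde{\mathcal V}$, the normal components of its elements belong to $L^\infty(\partial\mathscr O)$; because $\mathcal V_n$ is finite-dimensional and $\{v_k\}$ is bounded in $\mathcal V_n$, the coordinates of $v_k$ in that basis remain bounded, so that $\|v_{k,N}\|_{L^\infty(\partial\mathscr O)}\le M$ for some $M$ independent of $k$, with the same bound for $v_N$. Substituting $t=v_{k,N}(z)$ and $t'=v_N(z)$ into Lemma \ref{lem.est.theta}, and using $\Theta\in L^\infty_{\mathrm{loc}}(\mathbb R)$ (part of $H(\Theta)$), one obtains
\[
|\Theta_\varepsilon(v_{k,N}(z))-\Theta_\varepsilon(v_N(z))|\le L\,|v_{k,N}(z)-v_N(z)|\quad\text{a.e.\ on }\partial\mathscr O,
\]
where $L$ depends only on $M$, $\varepsilon$, $\mathfrak h$ and $\|\Theta\|_{L^\infty(-M-\varepsilon,M+\varepsilon)}$, not on $k$.

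Multiplying by $|w_N(z)|$, integrating over $\partial\mathscr O$, and applying Cauchy--Schwarz together with the continuity of the trace, I deduce
\[
\|\mathscr K_\varepsilon(v_k)-\mathscr K_\varepsilon(v)\|_{\mathcal V_n^*}\le L\,C_{\mathrm{tr}}^2\,\|v_k-v\|_{\mathcal V_n}\longrightarrow 0,
\]
which is the desired conclusion. The only subtle point is securing the uniform $L^\infty$-bound on $v_{k,N}$; this is immediate here because the Galerkin basis was taken in $\widetilde{\mathcal V}$ precisely to guarantee bounded normal traces. Should one wish to avoid this basis assumption, the same conclusion would follow by passing to an a.e.-convergent subsequence and invoking dominated convergence (continuity of $\Theta_\varepsilon$ on $\mathbb R$, together with a dominant coming from $L^2$-convergence of $v_{k,N}$), the uniqueness of the limit restoring convergence of the full sequence.
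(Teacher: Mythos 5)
Your proof is correct, but it follows a genuinely different route from the paper's. The paper takes a sequence converging \emph{weakly in $\mathcal V$}, extracts via Rellich's compactness an a.e.\ convergent sequence of normal traces, applies Egoroff's theorem to get uniform convergence off a set $\Gamma$ of measure $<\alpha$, uses Lemma \ref{lem.est.theta} on $\partial\mathscr O\setminus\Gamma$, and finally lets $\alpha\to 0$. You instead exploit the finite-dimensionality of $\mathcal V_n$ twice: once to reduce weak continuity to norm continuity, and once to secure a uniform $L^\infty(\partial\mathscr O)$ bound on the normal traces $v_{k,N}$ from the boundedness of the coordinates in a Galerkin basis chosen inside $\widetilde{\mathcal V}$. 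That bound turns Lemma \ref{lem.est.theta} into a single Lipschitz estimate valid a.e.\ on the whole boundary, so Egoroff is not needed and the conclusion follows from Cauchy--Schwarz and trace continuity. Your argument is shorter and, for the lemma as stated on $\mathcal V_n$, arguably cleaner: the paper's last step, which promotes the estimate from $\partial\mathscr O\setminus\Gamma$ to all of $\partial\mathscr O$ ``as $\alpha$ is arbitrary'' while writing $\|u_N^k-u_N\|_{L^\infty(\partial\mathscr O)}$, implicitly requires exactly the global $L^\infty$ control that you make explicit; the price is that your proof is genuinely tied to the finite-dimensional setting and to the basis lying in $\widetilde{\mathcal V}$, whereas the paper's Egoroff argument is the template reused later for weak limits in $\mathcal V$ itself. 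One caveat: your closing aside, in which you would replace the $L^\infty$ bound by an a.e.-convergent subsequence plus dominated convergence, is not justified as stated, since without a growth condition on $\Theta$ the composition $\Theta_\varepsilon\circ v_{k,N}$ need not admit an integrable dominant from mere $L^2$-domination of $v_{k,N}$; but your main argument does not rely on it.
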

\begin{proof}
Let $(u^k)_k$ be a sequence converging weakly to $u$ in $\mathcal V$. By Rellich's compactness criterion we may pass to a subsequence, which we still denote as $u^k$, so that $u_N^k=u^k.n$ converges to $u_N=u.n$ in $L^2(\partial\mathscr O)$ and then almost everywhere on $\partial\mathscr O$. It then follows that also $u_N^k\vee u_N$ and $u_N^k\wedge u_N$ converge to $u_N$. Thus by applying Egoroff's theorem we can find that for any $\alpha>0$ we can determine $\Gamma\subset \partial\mathscr O$ with $\sigma(\Gamma)<\alpha$ such that $u^k_N$, $u_N^k\vee u_N$ and $u_N^k\wedge u_N$ converge to $u_N$ uniformly on $\partial\mathscr O\setminus\Gamma$.

Let $\mu>0$, there exists $n_0$ such that for all $k\geq n_0$
\[
|u_N^k\vee u_N-u_N|<\frac{\mu}{2},
\]and 
\[
|u_N^k\wedge u_N-u_N|<\frac{\mu}{2}.
\]

If $s\in\mathbb R$ satisfies $u_N^k\wedge u_N-\varepsilon<s<u_N^k\vee u_N+\varepsilon$, then it satisfies $-\frac{\mu}{2}+u_N-\varepsilon<s<\frac{\mu}{2}+u_N+\varepsilon$. We choose $\mu$ such that $\frac{\mu}{2}>\varepsilon_0>\varepsilon$. Then $\varepsilon+\frac{\mu}{2}<\mu$ and $s$ satisfies $|s-u_N|<\mu$ which implies that $|s-\|u_N\||<\mu$ with $\|u_N\|=\|u_N\|_{L^\infty(\partial\mathscr O\setminus\Gamma)}$

By Lemma \ref{lem.est.theta}, we have for every $z\in\partial\mathscr O\setminus\Gamma$:
\begin{align*}
|\Theta_\varepsilon(u_N^k)-\Theta_\varepsilon(u_N)|
&\leq \esssup_{s\in]u_N^k\wedge u_N-\varepsilon,u_N^k\vee u_N+\varepsilon[}|\Theta(s)|\,\|\mathfrak h'\|_{\infty}\,|u_N^k-u_N|\,\\
&\qquad\qquad\times\left(u_N^k\vee u_N-u_N^k\wedge u_N+2\varepsilon\right)\\
&\leq \left(\mu+2\varepsilon\right)\esssup_{|s-\|u_N\||<\mu}|\Theta(s)|\,\|\mathfrak h'\|_{\infty}\,|u_N^k-u_N|\\
&\leq 2\mu\|\Theta\|_{L^\infty(]\|u_N\|-\mu,\|u_N\|+\mu[)}\,\|\mathfrak h'\|_{\infty}\,|u_N^k-u_N|\\
&\leq K_0\,|u_N^k-u_N|
\end{align*}
where $K_0=4k_0\varepsilon_0\|\Theta\|_{L^\infty(]\|u_N\|-2k_0\varepsilon_0,\|u_N\|+2k_0\varepsilon_0[)}\,\|\mathfrak h'\|_{C_c^\infty(]-\varepsilon_0,\varepsilon_0[)}$ for some $k_0\in ]0,1[$.

It follows that for all $v\in \mathcal V_n$
\begin{align*}
\int_{\partial\mathscr O\setminus\Gamma}\left|\Theta_\varepsilon(u_N^k)-\Theta_\varepsilon(u_N)\right|\,|v_N|\,{\rm d}\sigma(z)&\leq K_0\,\int_{\partial\mathscr O\setminus\Gamma} \left|u_N^k-u_N\right|\,|v_N|\,{\rm d}\sigma(z)\\
&\leq K_0\,\sigma(\partial\mathscr O\setminus \Gamma)\|v_N\|_{L^\infty(\partial\mathscr O\setminus\Gamma)}\|u_N^k-u_N\|_{L^\infty(\partial\mathscr O\setminus\Gamma)}
\end{align*}
As $\alpha$ is arbitrary we then conclude 

\[
\int_{\partial\mathscr O}\left|\Theta_\varepsilon(u_N^k)-\Theta_\varepsilon(u_N)\right|\,|v_N|\,{\rm d}\sigma(z)\leq K_1 \,\|u_N^k-u_N\|_{L^\infty(\partial\mathscr O)}\|v_N\|_{L^\infty(\partial\mathscr O)}
\]where $K_1:=K_0\,\sigma(\partial\mathscr O)$. It then follows that

\begin{equation*}
\left|\la\mathscr K_{\varepsilon}(u^k)-\mathscr K_{\varepsilon}(u),v\ra\right|\leq K_1 \,\|u_N^k-u_N\|_{L^\infty(\partial\mathscr O)}\|v_N\|_{L^\infty(\partial\mathscr O)}
\end{equation*}
Which complete the proof.
\end{proof}

\begin{proposition}\label{pro1}
Suppose that $H(\Theta)$ is satisfied. Then the regularized problem $(\mathscr P^n_\varepsilon)$  has at least one solution $u^{\varepsilon_n} \in \mathcal V_n$. Moreover the sequence $(u^{\varepsilon_n})_{n}$ is uniformly bounded on $\mathcal V$.
\end{proposition}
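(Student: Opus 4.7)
The plan is to reformulate $(\mathscr P^n_\varepsilon)$ as a zero-finding problem for a continuous map on the finite-dimensional space $\mathcal V_n$ and then invoke the classical Brouwer-type corollary: a continuous self-map $F$ of a finite-dimensional Hilbert space with $\la F(u),u\ra\ge 0$ on some sphere $\|u\|=R$ must vanish inside the closed ball. Identifying $\mathcal V_n$ with its dual, I would define $F:\mathcal V_n\to\mathcal V_n$ by
\[
\la F(u),v\ra=\la\mathscr Au+\mathscr B[u],v\ra+\int_{\partial\mathscr O}\Theta_\varepsilon(u_N)\,v_N\,{\rm d}\sigma-\la f,v\ra,\qquad v\in\mathcal V_n,
\]
so that a zero of $F$ is exactly a solution of $(\mathscr P^n_\varepsilon)$.

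Next I would verify continuity of $F$. Linearity of $\mathscr A$ and the trilinear (hence polynomial) character of $\mathscr B$ make those contributions continuous on $\mathcal V_n$; the boundary term is continuous thanks to Lemma \ref{lemK}, since on the finite-dimensional $\mathcal V_n$ weak and strong convergence coincide. Hence $F$ is continuous.

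The core step is the coercivity estimate, obtained by testing against $u$ itself and combining three facts: the antisymmetry identity $\la\mathscr B[u],u\ra=\int_{\mathscr O}(\rot u\times u)\cdot u\,{\rm d}\lambda=0$, which holds since $(a\times b)\cdot b=0$; the boundary lower bound of Lemma \ref{lem1}, which gives $\int_{\partial\mathscr O}\Theta_\varepsilon(u_N)u_N\,{\rm d}\sigma\ge -ab\,\sigma(\partial\mathscr O)$ uniformly in $\varepsilon$; and the $\mathcal V$-coercivity $\la\mathscr Au,u\ra=\nu\|\rot u\|_{L^2}^2\ge c\nu\|u\|_{\mathcal V}^2$, which is valid on $\mathcal V$ because on divergence-free tangentially vanishing vector fields in a simply connected domain the rot-norm controls the full $H^1$-norm. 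Bounding $|\la f,u\ra|$ by $\|f\|_{\mathcal V^*}\|u\|_{\mathcal V}$ yields
\[
\la F(u),u\ra\ge c\nu\|u\|_{\mathcal V}^2-\|f\|_{\mathcal V^*}\|u\|_{\mathcal V}-ab\,\sigma(\partial\mathscr O),
\]
which is nonnegative as soon as $\|u\|_{\mathcal V}\ge R$ for some explicit $R$ depending only on $\nu$, $\|f\|_{\mathcal V^*}$, $a$, $b$, and $\sigma(\partial\mathscr O)$. Crucially, $R$ is independent of both $n$ and $\varepsilon$.

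Applying the Brouwer-based corollary then produces $u^{\varepsilon_n}\in\mathcal V_n$ with $F(u^{\varepsilon_n})=0$ and $\|u^{\varepsilon_n}\|_{\mathcal V}\le R$, proving simultaneously existence and the uniform bound asserted in the proposition. The main obstacle I anticipate is justifying the $\mathcal V$-coercivity of $\mathscr A$: one must confirm that $\|\rot u\|_{L^2}$ dominates the full $H^1$-norm on $\mathcal V$, which is a known but non-trivial consequence of the boundary conditions encoded in the definition of $\mathcal V$ together with the simple connectedness of $\mathscr O$; all the other inputs (continuity of $F$, the identity $\la\mathscr B[u],u\ra=0$, and the uniform lower bound from Lemma \ref{lem1}) are either standard or already supplied by the paper.
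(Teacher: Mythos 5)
Your proposal is correct and follows essentially the same route as the paper: reformulate $(\mathscr P^n_\varepsilon)$ as a zero-finding problem on $\mathcal V_n$, establish continuity via Lemma \ref{lemK}, obtain the coercivity estimate from Lemma \ref{lem1} together with the $\mathcal V$-coercivity of $\mathscr A$ (the rot-norm equivalence on the simply connected domain) and the cancellation $\la\mathscr B[u],u\ra=0$, and conclude with the Brouwer-type corollary, with the bound $R$ independent of $n$ and $\varepsilon$. The only cosmetic difference is that you unpack the coercivity of $\mathscr A+\mathscr B[\cdot]$ explicitly, whereas the paper cites it as a known fact.
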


\begin{proof}
Let $\mathfrak i_n:\mathcal V_n\rightarrow \widetilde{\mathcal V}$ be the inclusion mapping of $\mathcal V_n$ into $\widetilde{\mathcal V}$ and $\mathfrak i_n^\star$ the dual projection mapping of $\widetilde{\mathcal V}^\star$ into $\mathcal V_n^\star$. Define $\mathscr A_n=\mathfrak i_n^\star\mathscr A\, \mathfrak i_n$, $\mathscr B_n[.]=\mathfrak i_n^\star\mathscr B[.]\, \mathfrak i_n$ and $f_n=\mathfrak i_n^\star f\in\mathcal V_n^\star$.

The regularized problem $(\mathscr P^n_\varepsilon)$ can be written equivalently in the form
\begin{equation}\label{e1}
\Lambda_{\varepsilon}(u^{\varepsilon_n})=0
\end{equation}
where $\Lambda_{\varepsilon}=\mathscr A_n+\mathscr B_n[.]+\mathscr K_{\varepsilon} - f_n$ from $\mathcal V_n$ into $\mathcal V_n^\star$. As the domain $\mathscr O$ is simply connected, it follows from \cite{bykhovskii1960orthogonal}, that the bilinear form 
\[\la\la u,v\ra\ra_{\mathcal V}= \ds\int_\Omega \rot\, u.\rot\, v \,d\lambda
\] 
generates a norm in $\mathcal{V}$ which is equivalent to the $H^1(\Omega;\real^d)$-norm. From this and from the Cauchy-Schwartz inequality one can deduce that there exist some $c>0$ such that 
\[
\la\mathscr A u,v\ra\leq c\,\|u\|_{\mathcal V}\|v\|_{\mathcal V},\quad\text{for all }u,\,v\in\mathcal V,
\]which means that $\mathscr A$ is continuous. Moreover, from \cite[Chapter II]{Temam} $\mathscr B[.]$ is continuous. This obviously implies that $\mathscr A_n$ and $\mathscr B_n[.]$ and consequently $\mathscr A_n+\mathscr B_n[.]$ are continuous. Finally by using Lemma \ref{lemK},  the continuity of $\Lambda_\varepsilon$ follows. Because of the coercivity of $\mathscr N:=\mathscr A+\mathscr B[.]$ (see \cite{aadi18}) and Lemma \ref{lem1} we have the estimate
\begin{equation}
\la \Lambda(u^{\varepsilon_n}),u^{\varepsilon_n}\ra \geq M\, \norm{u^{\varepsilon_n}}^2-ab \sigma(\partial{\mathscr O})-\|f\|\norm{u^{\varepsilon_n}},
\end{equation}where $M$ is the coerciveness constant of $\mathscr N$. By applying Brouwer's fixed point theorem (cf. \cite{lions1969quelques} p.53) we obtain that (\ref{e1})
admits a bounded solution $u^{\varepsilon_n}$. 

\end{proof}
\begin{proposition}\label{pro2}
The sequence $(\Theta_{\varepsilon_n}(u^{\varepsilon_n}))_n$ is weakly precompact in $L^1(\partial{\mathscr O})$.
\end{proposition}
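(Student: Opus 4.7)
The plan is to invoke the Dunford--Pettis theorem, which reduces the problem to showing that the sequence $(\Theta_{\varepsilon_n}(u_N^{\varepsilon_n}))_n$ is bounded in $L^1(\partial\mathscr O)$ and uniformly integrable (i.e.\ its integrals are equi-absolutely continuous). Both properties will come from a single a priori estimate on $\int_{\partial\mathscr O}\Theta_{\varepsilon_n}(u_N^{\varepsilon_n})u_N^{\varepsilon_n}\,{\rm d}\sigma$ obtained by testing $(\mathscr P^n_\varepsilon)$ with $v=u^{\varepsilon_n}$, combined with the sign structure of $\Theta_{\varepsilon_n}$ identified in Lemma \ref{lem1}.

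First I would use the algebraic identity $\la \mathscr B[u^{\varepsilon_n}],u^{\varepsilon_n}\ra=\int_{\mathscr O}(\rot u^{\varepsilon_n}\times u^{\varepsilon_n})\cdot u^{\varepsilon_n}\,{\rm d}\lambda=0$, since the cross product is orthogonal to its factors. Consequently, plugging $v=u^{\varepsilon_n}$ in $(\mathscr P^n_\varepsilon)$ gives
\[
\int_{\partial\mathscr O}\Theta_{\varepsilon_n}(u_N^{\varepsilon_n})\,u_N^{\varepsilon_n}\,{\rm d}\sigma=\la f,u^{\varepsilon_n}\ra-\la \mathscr A u^{\varepsilon_n},u^{\varepsilon_n}\ra\leq \|f\|\,\|u^{\varepsilon_n}\|_{\mathcal V},
\]
which by Proposition \ref{pro1} is bounded by a constant $C$ independent of $n$.

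Second, I would exploit the three-case structure $\Theta_{\varepsilon_n}(\xi)\geq 0$ for $\xi>a$, $\Theta_{\varepsilon_n}(\xi)\leq 0$ for $\xi<-a$, and $|\Theta_{\varepsilon_n}(\xi)|\leq b$ for $|\xi|\leq a$ established in the proof of Lemma \ref{lem1}. On $\{|u_N^{\varepsilon_n}|>a\}$ the integrand $\Theta_{\varepsilon_n}(u_N^{\varepsilon_n})\,u_N^{\varepsilon_n}$ is non-negative, equals $|\Theta_{\varepsilon_n}(u_N^{\varepsilon_n})|\,|u_N^{\varepsilon_n}|$, and satisfies $|u_N^{\varepsilon_n}|>a$, so
\[
\int_{\{|u_N^{\varepsilon_n}|>a\}}|\Theta_{\varepsilon_n}(u_N^{\varepsilon_n})|\,{\rm d}\sigma\leq \frac{1}{a}\int_{\{|u_N^{\varepsilon_n}|>a\}}\Theta_{\varepsilon_n}(u_N^{\varepsilon_n})\,u_N^{\varepsilon_n}\,{\rm d}\sigma\leq \frac{C+ab\sigma(\partial\mathscr O)}{a},
\]
where the extra term compensates the contribution coming from $\{|u_N^{\varepsilon_n}|\leq a\}$ via Lemma \ref{lem1}. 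On the complementary set $\{|u_N^{\varepsilon_n}|\leq a\}$ one has $|\Theta_{\varepsilon_n}(u_N^{\varepsilon_n})|\leq b$. Adding the two contributions yields the uniform $L^1$-bound.

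Finally, to establish uniform integrability I would fix an arbitrary measurable set $E\subset\partial\mathscr O$ and a threshold $M\geq a$, and split
\[
\int_E|\Theta_{\varepsilon_n}(u_N^{\varepsilon_n})|\,{\rm d}\sigma\leq \int_{E\cap\{|u_N^{\varepsilon_n}|\leq M\}}|\Theta_{\varepsilon_n}(u_N^{\varepsilon_n})|\,{\rm d}\sigma+\int_{\{|u_N^{\varepsilon_n}|>M\}}|\Theta_{\varepsilon_n}(u_N^{\varepsilon_n})|\,{\rm d}\sigma.
\]
The Chang assumption $\Theta\in L^\infty_{\rm loc}(\mathbb R)$ gives $\|\Theta_{\varepsilon_n}\|_{L^\infty(-M,M)}\leq C_M$ uniformly in $n$, so the first term is dominated by $C_M\sigma(E)$. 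The second term is at most $\frac{1}{M}\int_{\{|u_N^{\varepsilon_n}|>M\}}|\Theta_{\varepsilon_n}(u_N^{\varepsilon_n})|\,|u_N^{\varepsilon_n}|\,{\rm d}\sigma$, which by the same sign argument as above is bounded by $(C+ab\sigma(\partial\mathscr O))/M$. Given $\delta>0$, one first chooses $M$ so that the second contribution is $<\delta/2$ and then $\eta=\delta/(2C_M)$ so that $\sigma(E)<\eta$ forces the first to be $<\delta/2$, proving equi-absolute continuity. The only subtle point, and the main obstacle, is to extract the cancellation $\la\mathscr B[u],u\ra=0$ and combine it with the Rauch-type sign structure so that the quantity $\int\Theta_{\varepsilon_n}(u_N^{\varepsilon_n})\,u_N^{\varepsilon_n}\,{\rm d}\sigma$, controlled from above by the equation and from below by Lemma \ref{lem1}, simultaneously controls $\int|\Theta_{\varepsilon_n}(u_N^{\varepsilon_n})|\,{\rm d}\sigma$ and its tails.
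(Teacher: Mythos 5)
Your proposal is correct and follows essentially the same route as the paper: Dunford--Pettis, the a priori bound on $\int_{\partial\mathscr O}\Theta_{\varepsilon_n}(u_N^{\varepsilon_n})u_N^{\varepsilon_n}\,{\rm d}\sigma$ obtained by testing $(\mathscr P^n_\varepsilon)$ with $v=u^{\varepsilon_n}$ (the paper also silently uses $\la\mathscr B[u],u\ra=0$), and the sign structure of $\Theta_{\varepsilon_n}$ from Lemma \ref{lem1} to turn that bound into control of $\int|\Theta_{\varepsilon_n}(u_N^{\varepsilon_n})|$ and its tails. The only cosmetic difference is that you split by the level sets $\{|u_N^{\varepsilon_n}|>M\}$ directly, whereas the paper uses the pointwise inequality $s_0|\Theta_\varepsilon(s)|\leq|\Theta_\varepsilon(s)s|+s_0\sup_{|s|<s_0}|\Theta_\varepsilon(s)|$.
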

\begin{proof}
The Dunford-Pettis theorem (cf. \cite{ekeland1976convex}, p.239) implies that it suffices to show that for each $ \mu>0$ a $\delta(\mu)>0$ can be determined such that for $\Gamma \subset \partial{\mathscr O}$ with $\sigma(\Gamma) < \delta$
\begin{equation}\label{eq101} 
\int_\Gamma \abs{\Theta_{\varepsilon_n}(u_N^{\varepsilon_n})}{\rm d}\sigma<\mu
\end{equation} 
The inequality
\[ s_0\abs{\Theta_\varepsilon(s)}\leq\abs{\Theta_\varepsilon(s)s}+s_0 \sup_{\abs{s}<s_0}\abs{\Theta_\varepsilon(s)}\]
implies that
\begin{equation}\label{eq100} \int_\Gamma \abs{\Theta_{\varepsilon_n}(u_N^{\varepsilon_n})}\,d\,\sigma \leq \frac{1}{s_0}  \int_{\partial\mathscr O} \abs{\Theta_{\varepsilon_n}(u_N^{\varepsilon_n})u_N^{\varepsilon_n}}\,{\rm d}\,\sigma+\int_\Gamma\displaystyle\sup_{\abs{u_N^{\varepsilon_n}(x)}\leq s_0}\abs{\Theta_{\varepsilon_n}(u_N^{\varepsilon_n})}\,{\rm d}\,\sigma
\end{equation}
But
\begin{align*} \int_{\partial\mathscr O} \abs{\Theta_{\varepsilon_n}(u_N^{\varepsilon_n})u_N^{\varepsilon_n}}\,{\rm d}\,\sigma=& \int_{\abs{u_N^{\varepsilon_n}}>\rho_1} \abs{\Theta_{\varepsilon_n}(u_N^{\varepsilon_n})u_N^{\varepsilon_n}}\, {\rm d}\,\sigma+\int_{\abs{u_N^{\varepsilon_n}}\leq\rho_1} \abs{\Theta_{\varepsilon_n}(u_N^{\varepsilon_n})u_N^{\varepsilon_n}}\,{\rm d}\,\sigma \\
=& \int_{\abs{u_N^{\varepsilon_n}}>\rho_1} \abs{\Theta_{\varepsilon_n}(u_N^{\varepsilon_n})u_N^{\varepsilon_n}}\,{\rm d}\,\sigma- \int_{\abs{u_N^{\varepsilon_n}}\leq\rho_1} \abs{\Theta_{\varepsilon_n}(u_N^{\varepsilon_n})u_N^{\varepsilon_n}}\, {\rm d}\,\sigma\\
&\qquad +2\int_{\abs{u_N^{\varepsilon_n}}\leq\rho_1} \abs{\Theta_{\varepsilon_n}(u_N^{\varepsilon_n})u_N^{\varepsilon_n}}\,{\rm d}\,\sigma \\
\leq & \int_{\abs{u_N^{\varepsilon_n}}>\rho_1} \abs{\Theta_{\varepsilon_n}(u_N^{\varepsilon_n})u_N^{\varepsilon_n}}\,{\rm d}\,\sigma+ \int_{\abs{u_N^{\varepsilon_n}}\leq\rho_1} \Theta_{\varepsilon_n}(u_N^{\varepsilon_n})u_N^{\varepsilon_n}\,{\rm d}\,\sigma\\
&\qquad +2\int_{\abs{u_N^{\varepsilon_n}}\leq\rho_1} \abs{\Theta_\varepsilon(u_N^{\varepsilon_n})u_N^{\varepsilon_n}}\,{\rm d}\,\sigma\\
= & \int_{\partial\mathscr O} \Theta_{\varepsilon_n}(u_N^{\varepsilon_n})u_N^{\varepsilon_n}\, {\rm d}\,\sigma+2\int_{\abs{u_N^{\varepsilon_n}}\leq\rho_1} \abs{\Theta_{\varepsilon_n}(u_N^{\varepsilon_n})u_N^{\varepsilon_n}}\,{\rm d}\,\sigma\\
=& \la f,u^{\varepsilon_n}\ra- \la \mathscr Au^{\varepsilon_n},u^{\varepsilon_n} \ra-\la \mathscr B[u^{\varepsilon_n}],u^{\varepsilon_n} \ra\\ 
&\qquad+2\int_{\abs{u_N^{\varepsilon_n}}\leq\rho_1} \abs{\Theta_{\varepsilon_n}(u_N^{\varepsilon_n})u_N^{\varepsilon_n}}\,{\rm d}\,\sigma \\
=& \la f,u^{\varepsilon_n}\ra- \la \mathscr Au^{\varepsilon_n},u^{\varepsilon_n} \ra +2\int_{\abs{u_{\varepsilon_n}}\leq\rho_1} \abs{\Theta_\varepsilon(u_N^{\varepsilon_n})u_N^{\varepsilon_n}}\,{\rm d}\,\sigma \\
\leq & c+2ab\,\sigma(\partial{\mathscr O}), \quad \text{ for some constant }c.
\end{align*}
In the last two inequalities we have used the boundedness of the solutions $(u^{\varepsilon_n})_n$, the estimate \eqref{eq14} and the relation
\begin{equation}\label{eq8}
\sup_{\abs{s}\leq s_0} \abs{\Theta_\varepsilon(s)}\leq \displaystyle\esssup_{\abs{s}\leq s_0+1} \abs{\Theta (s)},
\end{equation}
Now choose $s_0$ such that
for all $\varepsilon$ and $n$
\begin{equation}\label{eq9}
\frac{1}{s_0} \int_\Gamma\abs{\Theta_{\varepsilon_n}(u_N^{\varepsilon_n})u_N^{\varepsilon_n}}{\rm d}\sigma \leq \frac{1}{s_0}(c+2\rho_1\rho_2\sigma(\partial{\mathscr O})) \leq \frac{\mu}{2}
\end{equation}
and $\delta$ such that 
\begin{equation}\label{eq10}
\displaystyle\esssup_{\abs{s}\leq s_0+1} \abs{\Theta (s)} \leq \frac{\mu}{2\delta}
\end{equation}
Relation (\ref{eq8}) implies with (\ref{eq9}) that for $\sigma(\Gamma)<\delta$
\begin{equation*}
\int_\Gamma \sup_{\abs{u_N^{\varepsilon_n}}\leq s_0} \abs{\Theta_{\varepsilon_n}(u_N^{\varepsilon_n})}\, {\rm d}\, \sigma \leq \displaystyle\esssup_{\abs{u_N^{\varepsilon_n}}\leq s_0+1} \abs{\Theta_{\varepsilon_n}(u_N^{\varepsilon_n})} \sigma(\Gamma)\leq \frac{\mu}{2\delta}.\delta \leq \frac{\mu}{2}.
\end{equation*}
From the relations (\ref{eq100}), (\ref{eq9}) and (\ref{eq10}), the relation (\ref{eq101}) results,
i.e. that $\{\Theta_{\varepsilon_n}(u_N^{\varepsilon_n})\}$ is weakly precompact in $L^1(\partial{\mathscr O})$.

\end{proof}


\section{Existence}
In this section we present an existence result corresponding to the hemivariational inequality for Navier-Stokes systems under Rauch-Chang assumption $H(\Theta)$. The uniqueness question is discussed in Remark \ref{rem}.

\begin{theorem}\label{existence}
Under assumption $H(\Theta)$, the problem \eqref{eqdef} has at least one solution.
\end{theorem}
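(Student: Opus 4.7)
The plan is to start from the Galerkin solutions $u^{\varepsilon_n}$ produced by Proposition \ref{pro1} and pass to the limit along a subsequence, exploiting the weak $L^1$-precompactness established in Proposition \ref{pro2}. Since $(u^{\varepsilon_n})$ is uniformly bounded in $\mathcal V$, I extract a subsequence (not relabeled) with $u^{\varepsilon_n}\rightharpoonup u$ weakly in $\mathcal V$. By Rellich--Kondrachov compactness, $u^{\varepsilon_n}\to u$ strongly in $L^3(\mathscr O;\real^d)$ (valid for $d=2,3$); by compactness of the trace operator from $H^1(\mathscr O)$ into $L^2(\partial\mathscr O)$, $u_N^{\varepsilon_n}\to u_N$ strongly in $L^2(\partial\mathscr O)$, and after a further extraction also pointwise a.e.\ on $\partial\mathscr O$. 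Proposition \ref{pro2} then yields, up to a further subsequence, $\xi\in L^1(\partial\mathscr O)$ with $\Theta_{\varepsilon_n}(u_N^{\varepsilon_n})\rightharpoonup \xi$ weakly in $L^1(\partial\mathscr O)$.

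Next I pass to the limit in the Galerkin identity. Fix $m\in\mathbb N$ and $v\in\mathcal V_m$; for $n\geq m$ the vector $v$ is admissible in $(\mathscr P^n_\varepsilon)$. The term $\la\mathscr A u^{\varepsilon_n},v\ra\to\la\mathscr A u,v\ra$ follows from weak $\mathcal V$-convergence and linear continuity of $\mathscr A$. For the convective term, using the scalar triple product identity $(a\times b)\cdot c=a\cdot(b\times c)$, I decompose
\[
\la\mathscr B[u^{\varepsilon_n}]-\mathscr B[u],v\ra=\int_{\mathscr O}\rot(u^{\varepsilon_n}-u)\cdot(u^{\varepsilon_n}\times v)\,d\lambda+\int_{\mathscr O}\rot u\cdot\bigl((u^{\varepsilon_n}-u)\times v\bigr)\,d\lambda,
\]
and each integral vanishes by a weak-strong argument (using $\rot(u^{\varepsilon_n}-u)\rightharpoonup 0$ in $L^2$, strong $L^3$-convergence of $u^{\varepsilon_n}$, and the embedding $\mathcal V\hookrightarrow L^6$). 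For the boundary term, $v\in\mathcal V_m\subset\widetilde{\mathcal V}$ gives $v_N\in L^\infty(\partial\mathscr O)$, so the weak $L^1$-convergence produces $\int_{\partial\mathscr O}\Theta_{\varepsilon_n}(u_N^{\varepsilon_n})v_N\,d\sigma\to\int_{\partial\mathscr O}\xi\,v_N\,d\sigma$. The limit identity in \eqref{eqdef} then holds for every $v\in\bigcup_m\mathcal V_m$, and hence for every $v\in\widetilde{\mathcal V}$ by density of the Galerkin basis.

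It remains to check $\xi(z)\in\partial j(u_N(z))$ for a.e.\ $z\in\partial\mathscr O$. Under $H(\Theta)$ Chang's theorem gives $\partial j(t)=[\underline\Theta(t),\overline\Theta(t)]$, so I must prove $\underline\Theta(u_N(z))\leq\xi(z)\leq\overline\Theta(u_N(z))$ a.e. Fix $\delta,\alpha>0$; by Egoroff's theorem there is $E_\alpha\subset\partial\mathscr O$ with $\sigma(\partial\mathscr O\setminus E_\alpha)<\alpha$ on which $u_N^{\varepsilon_n}\to u_N$ uniformly. For $n$ large enough and every $z\in E_\alpha$, any $s$ with $|s|<\varepsilon_n$ satisfies $|u_N^{\varepsilon_n}(z)-s-u_N(z)|<\delta$, so
\[
\Theta_{\varepsilon_n}(u_N^{\varepsilon_n}(z))=\int_{-\varepsilon_n}^{\varepsilon_n}\Theta(u_N^{\varepsilon_n}(z)-s)\mathfrak h_{\varepsilon_n}(s)\,d\lambda(s)\leq\overline\Theta_\delta(u_N(z)).
\]
Applying Mazur's lemma to the weak $L^1$-convergence, a sequence of convex combinations of the tail $(\Theta_{\varepsilon_n}(u_N^{\varepsilon_n}))_{n\geq n_0}$ converges strongly, hence pointwise a.e.\ along a subsequence, to $\xi$. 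These convex combinations preserve the upper bound $\overline\Theta_\delta(u_N)$ on $E_\alpha$, yielding $\xi\leq\overline\Theta_\delta(u_N)$ a.e.\ on $E_\alpha$. Letting $\delta\to 0^+$ and then $\alpha\to 0^+$ produces $\xi\leq\overline\Theta(u_N)$ a.e.\ on $\partial\mathscr O$; the reverse bound is symmetric.

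The principal obstacle is this last subdifferential identification, since the mollifier acts on an argument $u_N^{\varepsilon_n}$ that itself varies with $n$. One must therefore coordinate three simultaneous limits---the vanishing of $\varepsilon_n$, the Egoroff-uniform control on $E_\alpha$, and the passage from weak $L^1$-convergence to a pointwise bound via Mazur's lemma---before finally shrinking the regularization parameter. By comparison, the convergence of the linear and convective terms follows from now-standard weak-strong arguments for Navier--Stokes.
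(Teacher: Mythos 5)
Your proposal is correct and follows essentially the same route as the paper's proof: Galerkin approximation of the mollified problem, the uniform bound and Dunford--Pettis weak $L^1$-precompactness from Propositions \ref{pro1} and \ref{pro2}, trace compactness plus Egoroff, and the two-sided bound $\underline{\Theta}_\mu(u_N)\le\Theta_{\varepsilon_n}(u_N^{\varepsilon_n})\le\overline{\Theta}_\mu(u_N)$ on the Egoroff set before shrinking $\mu$ and $\alpha$. The only (harmless) divergence is at the very last step, where you extract the pointwise inclusion $\xi\in[\underline{\Theta}(u_N),\overline{\Theta}(u_N)]$ via Mazur's lemma, whereas the paper pairs the two-sided inequality with arbitrary nonnegative $v\in L^\infty(\partial\mathscr O\setminus\Gamma)$ and passes to the limit in the integrals; both devices are standard and equivalent here, and your more explicit weak--strong treatment of the convective term only fills in what the paper dismisses as ``a simple passage \`a la limite.''
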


\begin{proof}
From Proposition \ref{pro1}, we have that $\|u^{\varepsilon_n}\|<c$, where $c$ is independent of $\varepsilon$ and $n$. Thus as $\varepsilon\rightarrow  0$, $n\rightarrow\infty$ and by considering subsequences if necessary, we may write that
\[
u^{\varepsilon_n}\rightarrow u,\quad\text{ weakly in }\mathcal V,
\]with $u\in\mathcal V$. Then, by the compactness of $\gamma$ the trace of $\mathcal V$ into $\mathrm L^2(\partial\mathscr O,\mathbb R^d)$, it follows that 
\[
\gamma\,u^{\varepsilon_n}\rightarrow \gamma\,u,\quad\text{ in }\mathrm L^2(\partial\mathscr O,\mathbb R^d).
\]
This implies that $u^{\varepsilon_n}_N=\gamma u^{\varepsilon_n}.\,n\rightarrow \gamma u.\,n=u_N$ in $L^2(\partial\mathscr O,\mathbb R^d)$ and thus $u^{\varepsilon_n}_N(z)\rightarrow u_N(z)$ a.e. $z\in\partial\mathscr O$.
Moreover due to Proposition \ref{pro2} we can write that
\begin{equation}
\Theta_\varepsilon(u^{\varepsilon_n}_N)\rightarrow  \xi,\quad\text{weakly in }  L^1(\partial{\mathscr O})
\end{equation}
By applying the properties of the Galerkin basis and a simple passage \`a la limite we obtain
\begin{equation}\label{eqlim}
\la \mathscr Au+\mathscr B[u],v\ra+\ds\int_{\partial\mathscr O} \xi v_N {\rm d}\sigma(x)=\la f,v\ra, \quad\forall v\in \widetilde{\mathcal V}
\end{equation}
from which it follows that 
\[
\left|\int_{\partial\mathscr O} \xi v_N {\rm d}\sigma(x)\right|\leq k\|v\|_{\mathcal V},
\]
and that the linear functional $\mathscr E$ can be uniquely extended to the whole space with $\mathscr E(\xi)\in \mathcal V^*$. Thus the expression \eqref{eqlim} can be written in the form

\begin{equation*}
\la \mathscr Au+\mathscr B[u],v\ra+\ds (\mathscr E(\xi),v)=\la f,v\ra,\quad\forall v\in \mathcal V.
\end{equation*}
In order to complete the proof it will be shown that
\begin{equation*}
\xi \in \hat{\Theta}(u_N(z))=\ds \partial j(u_N(z)),\quad\text{ for a.e }  z\in \partial{\mathscr O}.
\end{equation*}
As $u_N^{\varepsilon_n}\rightarrow u_N$ a.e., then by applying Egoroff's theorem we can find that for any $\alpha > 0$ we can determine $\Gamma\subset \partial{\mathscr O}$   with  $\sigma(\Gamma) <\alpha$  such that
\begin{equation*}
u_N^{\varepsilon_n}\rightarrow u_N,\quad\text{ uniformly on } \partial{\mathscr O} \setminus \Gamma,
\end{equation*} with $u_N\in \mathrm L^{\infty}(\partial{\mathscr O} \setminus \Gamma)$. Thus for any $\alpha > 0$ we can find $\Gamma \subset \partial{\mathscr O}$ with $\sigma(\Gamma) <\alpha$
such that for any $\mu> 0$ and for $\varepsilon < \varepsilon_0 < \mu/2$ and $n > n_0 > 2/\mu$ we have
\begin{equation*}
|u_N^{\varepsilon_n}- u_N|<\frac{\mu}{2},\quad\text{ on } \partial{\mathscr O}\setminus\Gamma.
\end{equation*}

Consequently, one obtain that

\begin{equation*}
\begin{array}{ll}
\Theta_\varepsilon(u^{\varepsilon_n}_N)&\leq \displaystyle\esssup_{|u_N^{\varepsilon_n}-  \xi|\leq \varepsilon} \Theta(\xi)\\
&\leq \displaystyle\esssup_{|u_N^{\varepsilon_n}-  \xi|\leq  \frac{\mu}{2}} \Theta(\xi)\\
&\leq \displaystyle\esssup_{|u_N-  \xi|\leq  \mu} \Theta(\xi)\\
&= \overline{\Theta}_\mu(u_N)
\end{array}
\end{equation*}
Analogously we prove the inequality

\begin{equation*}
 \underline{\Theta}_\mu(u_N)=\displaystyle\essinf_{|u_N -\xi|\leq \mu}\Theta(\xi)\leq \Theta_\varepsilon(u_N^{\varepsilon_n})
\end{equation*}

We take now $v \geq 0$ a.e. on $\partial{\mathscr O}\setminus\Gamma$  with $v\in \mathrm L^\infty(\partial{\mathscr O}\setminus\Gamma ) $. This implies 

\begin{equation*}
\int_{ \partial{\mathscr O}\setminus\Gamma} \underline{\Theta}_\mu(u_N)\,v {\rm d}\sigma\leq \int_{ \partial{\mathscr O}\setminus\Gamma} \Theta_\varepsilon(u_N^{\varepsilon_n} )\,v {\rm d}\sigma\leq 
\int_{ \partial{\mathscr O}\setminus\Gamma}\overline{\Theta}_\mu(u_N)\,v {\rm d}\sigma
\end{equation*}

Taking the limit $\varepsilon \rightarrow 0$ as $n\rightarrow \infty$ we obtain that
\begin{equation*}
\int_{ \partial{\mathscr O}\setminus\Gamma} \underline{\Theta}_\mu(u_N)\,v {\rm d}\sigma\leq \int_{ \partial{\mathscr O}\setminus\Gamma} \xi\, v {\rm d}\sigma\leq\int_{ \partial{\mathscr O}\setminus\Gamma}\overline{\Theta}_\mu(u_N)\,v {\rm d}\sigma
\end{equation*}
and as $\mu\rightarrow 0$  that
\begin{equation*}
\int_{ \partial{\mathscr O}\setminus\Gamma} \underline{\Theta}(u_N)\, v {\rm d}\sigma\leq \int_{ \partial{\mathscr O}\setminus\Gamma} \xi\, v {\rm d}\sigma\leq\int_{ \partial{\mathscr O}\setminus \Gamma}\overline{\Theta}(u_N)\,v {\rm d}\sigma
\end{equation*}

Since $v$ is arbitrary we have that

\begin{equation*}
\xi\in [ \underline{\Theta}(u_N), \overline{\Theta}(u_N)     ]=  \widehat{\Theta}(u_N)
\end{equation*}

where $\sigma(\partial\mathscr O) < \alpha $. For  $\alpha $ as small as possible, we obtain the result.

\end{proof}
Several of the arguments applied in the proof of this theorem are borrowed
from the method developed in \cite{rauch1977discontinuous} for the existence proof for semilinear
differential equations.

\begin{remark}\label{rem}
The question of uniqueness is more delicate. In fact even if we suppose the following monotonicity type assumption on $\Theta$ in the way did in \cite{M05HE}
\[
 \displaystyle\essinf_{\xi_1\neq\xi_2}\frac{\Theta(\xi_1)-\Theta(\xi_2)}{\xi_1-\xi_2}
 >-m
 \]a problem occurs when one needs to get estimates in $L^2(\partial\mathscr O)$ for $\xi\in L^1(\partial\mathscr O)$, where $(u,\xi)$ is a solution of \eqref{eqdef}. But as $L^2(\partial\mathscr O)\subset L^1(\partial\mathscr O)$ it may occur that $\xi\in L^1(\partial\mathscr O)\setminus L^2(\partial\mathscr O)$. As we have a weak assumption and we don't make use of any type of growth conditions we have in fact enlarged the space where we are looking for a solution. By doing so we loose any hope to prove a uniqueness result without a growth condition.

\end{remark}

\section{Dependence result}

In this section, we characterize the dependence of solutions on the hemivariational part, particularly on the functions $\Theta$. Consider a sequence of functions $\Theta^k$ converging in some sense to a function $\Theta^\infty$. Our aim is to prove that the solutions $u^k$ constructed from $\Theta^k$ converge to $u^\infty$ corresponding to the function $\Theta^\infty$. To do so we make the following hypothesis:
\\
\begin{enumerate}
\item[ $(H^k)$ ] 
 \begin{enumerate}
 \item[(i) ] $(\Theta^k)_{k\in\mathbb N}\subset L^\infty_{loc}(\real)$ and $\Theta^k(t \pm 0)$ exists for any  $t \in \real$ and $k\in\mathbb N$.
 \item[ (ii) ] there is  $t_{0}>0$ such that
\begin{equation}
 \sup_{ ]-\infty,-t_{0}[ } \Theta^k(t) \leq 0 \leq \inf _{ ]t_{0},+\infty[ } \Theta^k(t),\quad \text{ for all }k\in\mathbb N
 \end{equation}
 \end{enumerate}
\item[$ (H^\infty ) $ ]

\begin{enumerate}
\item[ (iii) ] $\Theta^\infty\in L^\infty_{loc}(\real)$ and  $\Theta^\infty(t \pm 0)$ exists for any  $t \in \real$.
\item[ (iv) ] $\limsup_{k\to \infty}\mathrm{Graph}(\widehat{\Theta}^k(.))\subset \mathrm{Graph}(\widehat{\Theta}^\infty(.)) $ (in the sense of Kuratowski, see \cite{AF90}).
\end{enumerate}
\end{enumerate}
\begin{theorem}
Assume $(H^k)$ and $(H^\infty)$ hold and $f\in\mathcal V^\star$. Let $(u^k)_{k\in\mathbb N}$ denotes a sequence of solutions of the problem \eqref{eqdef}, where $\Theta$ is replaced by $\Theta^k$. Then there exists a subsequence of $(u^k)_{k\in\mathbb N}$(denoted by the same symbol) such that $u^k\rightarrow u^\infty$ weakly in $\mathcal V$, where $u^\infty\in \mathcal V$ is a solution to \eqref{eqdef} corresponding to $\Theta^\infty$.
\end{theorem}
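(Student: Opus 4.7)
The plan is to follow the same scheme as in the existence proof (Theorem \ref{existence}), only now replacing the regularization parameter $\varepsilon$ by the index $k$ and replacing the uniform-convergence-type passage to the limit by the Kuratowski upper-limit hypothesis $(H^\infty)(iv)$. Concretely, each $u^k$ comes with a companion $\xi^k\in L^1(\partial\mathscr O,\mathbb R)$ such that
\[
\la\mathscr Au^k+\mathscr B[u^k],v\ra+\int_{\partial\mathscr O}\xi^k\,v_N\,{\rm d}\sigma=\la f,v\ra,\quad \xi^k(z)\in\partial j^k(u_N^k(z))\text{ a.e.}
\]
First I would establish a uniform bound $\|u^k\|_{\mathcal V}\leq c$ independent of $k$. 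Testing with $v=u^k$, using the coercivity of $\mathscr N=\mathscr A+\mathscr B[\,\cdot\,]$ together with $\la\mathscr B[u^k],u^k\ra=0$, and estimating the boundary integral by an analogue of Lemma \ref{lem1} (the Rauch-type uniform-in-$k$ assumption $(H^k)(ii)$ gives constants $a,b>0$ that may be chosen independent of $k$, since only the essential bounds on $\Theta^k$ on $[-t_0,t_0]$ enter) yields $M\|u^k\|^2-ab\,\sigma(\partial\mathscr O)-\|f\|\,\|u^k\|\le 0$, hence the desired bound.

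Next I would extract a subsequence $u^k\rightharpoonup u^\infty$ weakly in $\mathcal V$; by the compactness of the trace $\gamma:\mathcal V\hookrightarrow L^2(\partial\mathscr O;\mathbb R^d)$, we obtain $u_N^k\to u_N^\infty$ in $L^2(\partial\mathscr O)$ and, passing to a further subsequence, a.e.\ on $\partial\mathscr O$. The weak precompactness of $(\xi^k)$ in $L^1(\partial\mathscr O)$ is obtained via Dunford–Pettis exactly as in Proposition \ref{pro2}: uniform absolute continuity follows from the splitting $|\xi^k|\le |\xi^k u_N^k|/s_0+\sup_{|t|\le s_0}|\xi^k(t)|$ together with the energy identity $\int_{\partial\mathscr O}\xi^k u_N^k\,{\rm d}\sigma=\la f,u^k\ra-\la\mathscr Au^k,u^k\ra+2\int_{|u_N^k|\le \rho_1}|\xi^k u_N^k|{\rm d}\sigma$, which is bounded uniformly in $k$ because $(u^k)$ is bounded and the inner integrand is controlled on a compact interval by the local $L^\infty$ bounds of $\Theta^k$ (uniform in $k$, again by $(H^k)$). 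So $\xi^k\rightharpoonup\xi^\infty$ weakly in $L^1(\partial\mathscr O)$. Passing to the limit in the linear terms and in the convective term (the latter via the compact embedding $\mathcal V\hookrightarrow L^4$, so that $\mathscr B[u^k]\to\mathscr B[u^\infty]$ in $\mathcal V^\star$) gives the variational identity for $(u^\infty,\xi^\infty)$.

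The main obstacle, as expected, is the pointwise subdifferential inclusion $\xi^\infty(z)\in\partial j^\infty(u_N^\infty(z))=\widehat{\Theta}^\infty(u_N^\infty(z))$ a.e. Since weak $L^1$ convergence does not preserve a.e.\ inclusions, I would proceed by Mazur's lemma: there exist convex combinations $\eta^k=\sum_{i\ge k}\lambda_i^k \xi^i$ with $\eta^k\to\xi^\infty$ strongly in $L^1(\partial\mathscr O)$, hence a subsequence converges a.e. Fix such a point $z\in\partial\mathscr O$ where additionally $u_N^k(z)\to u_N^\infty(z)$. Then $(u_N^i(z),\xi^i(z))\in\mathrm{Graph}(\widehat{\Theta}^i)$ for each $i$, so every cluster point of this sequence lies in $\limsup_k \mathrm{Graph}(\widehat{\Theta}^k)\subset \mathrm{Graph}(\widehat{\Theta}^\infty)$ by $(H^\infty)(iv)$. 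Since $\widehat{\Theta}^\infty(u_N^\infty(z))$ is a closed interval (hence convex), the Aubin–Frankowski convergence theorem (Kuratowski upper limit preserved under convex combinations at points of graph convergence, cf.\ \cite{AF90}) ensures that the convex combinations $\eta^k(z)$ also satisfy $\eta^k(z)\in\widehat{\Theta}^\infty(u_N^\infty(z))+o(1)$, and in the limit $\xi^\infty(z)\in\widehat{\Theta}^\infty(u_N^\infty(z))=\partial j^\infty(u_N^\infty(z))$. Combined with an Egoroff argument (restricting to $\partial\mathscr O\setminus\Gamma$ on which $u_N^k\to u_N^\infty$ uniformly and $u_N^\infty\in L^\infty$, then letting $\sigma(\Gamma)\downarrow 0$), this yields the inclusion a.e.\ on $\partial\mathscr O$ and completes the proof.
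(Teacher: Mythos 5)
Your proposal follows essentially the same route as the paper's proof: a uniform a priori bound from the Rauch-type assumption $(H^k)(ii)$ and coercivity, extraction of a weakly convergent subsequence with a.e.\ convergence of the traces, Dunford--Pettis for the multipliers in $L^1(\partial\mathscr O)$, passage to the limit in the variational identity, and then the Aubin--Frankowska convergence theorem combined with Egoroff and $(H^\infty)(iv)$ to recover the inclusion $\xi^\infty\in\widehat{\Theta}^\infty(u_N^\infty)$ a.e. The only difference is cosmetic: you unpack the Mazur's-lemma argument underlying Theorem 7.2.1 of Aubin--Frankowska, which the paper simply cites.
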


The assumption $(H^k)(ii)$ is slightly stronger than the one needed usually to ensure the existence of solutions but not too restrictive. As we will see in the proof of the theorem one can always find a constant $\delta>0$ such that $\Theta^k$ is positive in $]\delta,+\infty[$ including for the discontinuities(which is not the case for the usual assumption). This Change allow us to have similar lower bound for the integral part, i.e. $\ds\int_{\partial{\mathscr O}} \Theta^k(u_N^k(z)\,u_N^k(z)\,{\rm d}\sigma(z)$ which make it possible to obtain the boundedness of $(u^k)_k$ and the weak precompactness of $\{\Theta^k(u_N^k)\}$ in $\mathrm L^1(\partial{\mathscr O})$.
\begin{proof}

The sequence $(u^k)_{k\in\mathbb N}$ is bounded. In fact, from $(H^k)$ (ii) we can find $\delta_1>0$ and $\delta_2>0$ such that 

\begin{equation*}
\left\{
\begin{array}{ll}
\Theta^k(t)\geq 0  &\text{ si } t>\delta_1\\
\Theta^k(t)\leq 0  &\text{ si } t<-\delta_1\\
|\Theta^k(t)|\leq \delta_2  &\text{ si } |t|\leq\delta_1\\
\end{array}
\right.
\end{equation*}

This in hand, one can prove in the same way as Lemma \ref{lem1} that
\begin{equation}\label{delta}
\int_{\partial{\mathscr O}} \Theta^k(u_N^k(z)).\,u_N^k(z)\,{\rm d}\sigma(z)\geq -\delta_1\delta_2\,\sigma(\partial{\mathscr O})
\end{equation}
Now by using the fact that $u^k$ is a solution of \eqref{eqdef} by replacing $\Theta$ by $\Theta^k$, we have
\[
\langle\mathscr A u^k,u^k\rangle+\langle\mathscr \mathscr B[u^k],u^k\rangle+\int_{\partial{\mathscr O}} \Theta^k(\gamma\,u_N^k(z))\gamma\,u_N^k(z)\,{\rm d}\sigma(z)=\la f,u^k\ra 
\]which leads, from \eqref{delta} and the coerciveness of $\mathscr A$, to $\alpha\|u^k\|^2-\delta_1\delta_2\sigma(\partial{\mathscr O})\leq c\|u^k\|$. If $\|u^k\|$ were unbounded(i.e. $\|u^k\|\rightarrow+\infty$ as $k\rightarrow +\infty$) it will leads to contradiction.
Now, by considering a subsequence if necessary, we may write
\[
u^k\rightarrow u^\infty,\quad\text{ weakly in }\mathcal V,
\]with $u^\infty\in\mathcal V$. Then, by the compactness of $\gamma$ the trace of $\mathcal V$ into $\mathrm L^2(\partial\mathscr O,\mathbb R^d)$, it follows that 
\[
\gamma\,u^k\rightarrow \gamma\,u^\infty,\quad\text{ in }\mathrm L^2(\partial\mathscr O,\mathbb R^d).
\]
This implies that $u_N^k=u^k.\,n\rightarrow u^\infty.\,n=u_N^\infty$ in $\mathrm L^2(\partial\mathscr O,\mathbb R^d)$ and thus $u_N^k(z)\rightarrow u_N^\infty(z)$ a.e. $z\in\partial\mathscr O$.

By Dunford-Pettis theorem, one can get, without much difficulties, the weak compactness in $\mathrm L^1(\partial{\mathscr O})$ of $\{\Theta^k(u_N^k)\}$. It follows that there exists $\Xi^\infty\in \mathrm L^1(\partial{\mathscr O})$ such that 
\[
\Theta^k(u_N^k)\rightarrow \Xi^\infty \text{ as } k\to+\infty\quad \text{weakly in } \mathrm L^1(\partial{\mathscr O}).
\]
Moreover, as $k\to+\infty$, we have
\[
\la\mathscr A\,u^k+\mathscr B[u^k],v\ra\rightarrow\la\mathscr A\,u^\infty+\mathscr B[u^\infty],v\ra,\quad\text{ for all }v\in\tilde{\mathcal V}.
\]
It follows that 
\[
\langle\mathscr A u^\infty+ \mathscr B[u^\infty],v\rangle+\int_{\partial{\mathscr O}} \Xi.\, v_N(z)\,{\rm d}\sigma(z)=\la f,v\ra,\quad \text{ for all }\tilde v\in\tilde{\mathcal V}.
\]
On the other hand $\tilde{\mathcal V}$ is dense in $\mathcal V$, and from the equality

\[
\int_{\partial{\mathscr O}} \Xi.\, v_N(z)\,{\rm d}\sigma(z)=\la f,v\ra-\langle\mathscr A u^\infty+ \mathscr B[u^\infty],v\rangle,\quad \text{ for all }\tilde v\in\tilde{\mathcal V},
\]it follows that 
\[
\mathscr A u^\infty+ \mathscr B[u^\infty]+ \mathscr E(\Xi)= f,\quad \text{ weakly in  }\mathcal V^*
\]
It remains to show that $\Xi\in\widehat{\Theta}^\infty(u^\infty_N)$ a.e. in $\partial\mathscr O$.
Recall that $u_N^k(z)\rightarrow u_N^\infty(z)$ a.e. $z\in\partial\mathscr O$. By applying Egoroff's theorem we find that for any $\alpha>0$ we can determine $\Gamma\in\partial\mathscr O$ with $\sigma(\Gamma)<\alpha$ such that 
\[
u_N^k\rightarrow u_N^\infty, \quad\text{uniformly on }\partial\mathscr O\setminus\Gamma.
\]
By applying Theorem 7.2.1 of Aubin and Frankowska \cite{AF90} we deduce
\[
\Xi\in\overline{\mathrm{conv}}\left(\limsup_{t\to u_N^\infty(z),\,k\to\infty}\widehat{\Theta}^k(t)\right)\subset\widehat{\Theta}^\infty(u_N^\infty(z)),\quad\text{for all }z\in \partial\mathscr O\setminus\Gamma.
\]
The latter inclusion follows from the assumption $(H^\infty)(iv)$, where $\sigma(\Gamma)<\alpha$. For $\alpha$ as small as possible, we obtain the result.

\end{proof}

\section{Optimal Control}
In this section, we provide a result on dependence of solutions with respect to the density of the external forces and use it to study the distributed parameter optimal control problem corresponding to it.

Before we start to discuss the optimal control problem, we first prove the following auxiliary result.
\begin{theorem}\label{fn}
Under $H(\Theta)$ assume that $f_n$, $f\in L^2(\mathscr O;\mathbb R^d)$ such that $f_n\rightarrow f$ weakly in $L^2(\mathscr O;\mathbb R^d)$. Then for every $\{u^n\}_n$ solution to the problem \eqref{eqdef} corresponding to $f_n$, we can find a subsequence (still denoted with the same symbol) such that $u^n\rightarrow u$ in $\mathcal V$ and $u$ is a solution to problem \eqref{eqdef} corresponding to $f$.

\end{theorem}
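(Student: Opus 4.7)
The plan is to follow the blueprint of Theorem~\ref{existence}, specialised to a fixed $\Theta$ and a variable right-hand side, with $(f_n)$ playing the role that the regularisation parameter played there. Since $f_n\rightharpoonup f$ in $L^2(\mathscr O;\mathbb R^d)$, the sequence $(f_n)$ is bounded in $L^2(\mathscr O;\mathbb R^d)\hookrightarrow\mathcal V^\star$. Writing each solution as a pair $(u^n,\xi^n)\in\mathcal V\times L^1(\partial\mathscr O)$ and using the extended form $\mathscr A u^n+\mathscr B[u^n]+\mathscr E(\xi^n)=f_n$ in $\mathcal V^\star$, I would test with $v=u^n$. The convective term vanishes since $\la\mathscr B[u^n],u^n\ra=\int_{\mathscr O}(\rot u^n\times u^n)\cdot u^n\,{\rm d}\lambda=0$. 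Because $\xi^n(z)\in\widehat\Theta(u_N^n(z))$ a.e., the pointwise argument of Lemma~\ref{lem1} (which only uses the Rauch sign condition on $\Theta$ and hence transfers to any measurable selector of $\widehat\Theta\circ u_N^n$) yields $\int_{\partial\mathscr O}\xi^n u_N^n\,{\rm d}\sigma\geq -ab\,\sigma(\partial\mathscr O)$. Combined with coerciveness of $\mathscr A$, this gives $M\|u^n\|_{\mathcal V}^2\leq ab\,\sigma(\partial\mathscr O)+\|f_n\|_{\mathcal V^\star}\|u^n\|_{\mathcal V}$, so $(u^n)$ is bounded in $\mathcal V$.

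Next I would extract a subsequence with $u^n\rightharpoonup u$ weakly in $\mathcal V$; compactness of the trace gives $u_N^n\to u_N$ in $L^2(\partial\mathscr O)$ and almost everywhere. Mimicking Proposition~\ref{pro2}, with $\xi^n$ replacing $\Theta_{\varepsilon_n}(u_N^{\varepsilon_n})$ and using the identity $\int_{\partial\mathscr O}\xi^n u_N^n\,{\rm d}\sigma=\la f_n,u^n\ra-\la\mathscr A u^n,u^n\ra$ to control the $L^1$-mass of $\xi^n u_N^n$, the Dunford--Pettis theorem produces $\xi\in L^1(\partial\mathscr O)$ with $\xi^n\rightharpoonup\xi$ weakly in $L^1(\partial\mathscr O)$. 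For a fixed $v\in\widetilde{\mathcal V}$, I would pass to the limit in
\begin{equation*}
\la\mathscr A u^n+\mathscr B[u^n],v\ra+\int_{\partial\mathscr O}\xi^n v_N\,{\rm d}\sigma=\la f_n,v\ra,
\end{equation*}
using linearity/continuity of $\mathscr A$ for the first term, the compact embedding $\mathcal V\hookrightarrow L^4(\mathscr O;\mathbb R^d)$ together with continuity of $\mathscr B$ to handle the quadratic convective term along the subsequence, the weak-$L^1$/$L^\infty$ duality on $\partial\mathscr O$ (valid because $v_N\in L^\infty(\partial\mathscr O)$ for $v\in\widetilde{\mathcal V}$), and $f_n\rightharpoonup f$ in $L^2$ tested against $v\in L^2$. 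This gives $\la\mathscr A u+\mathscr B[u],v\ra+\int_{\partial\mathscr O}\xi v_N\,{\rm d}\sigma=\la f,v\ra$ for every $v\in\widetilde{\mathcal V}$.

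The remaining and principal obstacle is to verify $\xi(z)\in\widehat\Theta(u_N(z))=\partial j(u_N(z))$ a.e., since weak-$L^1$ convergence is rather feeble and the inclusion is multivalued. I would replicate the tail of the proof of Theorem~\ref{existence}: Egoroff's theorem gives, for any $\alpha>0$, a set $\Gamma\subset\partial\mathscr O$ with $\sigma(\Gamma)<\alpha$ such that $u_N^n\to u_N$ uniformly on $\partial\mathscr O\setminus\Gamma$. Combining uniform convergence with the monotonicity of $\mu\mapsto\underline\Theta_\mu,\overline\Theta_\mu$ and the selector inclusion $\xi^n\in[\underline\Theta(u_N^n),\overline\Theta(u_N^n)]$, for every $\mu>0$ and every sufficiently large $n$ one has
\begin{equation*}
\int_{\partial\mathscr O\setminus\Gamma}\underline\Theta_\mu(u_N)\,v\,{\rm d}\sigma\leq\int_{\partial\mathscr O\setminus\Gamma}\xi^n v\,{\rm d}\sigma\leq\int_{\partial\mathscr O\setminus\Gamma}\overline\Theta_\mu(u_N)\,v\,{\rm d}\sigma
\end{equation*}
for any non-negative $v\in L^\infty(\partial\mathscr O\setminus\Gamma)$. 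Passing successively $n\to\infty$ (weak $L^1$), then $\mu\to 0^+$, and finally $\alpha\to 0^+$, and using the arbitrariness of $v$, one obtains $\xi\in\widehat\Theta(u_N)$ a.e., completing the proof. The delicate step is this multivalued identification, but all the required machinery is already present in the paper, so no new ingredient should be needed.
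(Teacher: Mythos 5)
Your proposal is correct and follows essentially the same route as the paper's own proof: coercivity plus the Rauch sign condition for the a priori bound, weak compactness and trace compactness, a Dunford--Pettis argument for the selectors $\xi^n$, limit passage in the weak formulation, and the Egoroff/squeeze argument with $\underline{\Theta}_\mu$, $\overline{\Theta}_\mu$ for the multivalued identification $\xi\in\widehat{\Theta}(u_N)$. If anything, you are more explicit than the paper at two points --- the weak-strong limit passage in the convective term via the compact embedding of $\mathcal V$ into $L^4(\mathscr O;\mathbb R^d)$ (the paper only invokes ``continuity'' of $\mathscr B$), and the correct order of limits ($n\to\infty$ under weak $L^1$ convergence before $\mu\to 0^+$) in the identification step, where the paper's phrasing via the inclusion $\widehat{\Theta}(u_N^n)\subset\widehat{\Theta}(u_N)$ is looser.
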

\begin{proof}
Let $f_n,\,f\in  L^2(\mathscr O;\mathbb R^d)$ with $f_n\rightarrow f$ weakly in $L^2(\mathscr O;\mathbb R^d)$. Then by Theorem \ref{existence}, there exists $(u^n,\xi_n)\in\mathcal V\times L^1(\partial\mathscr O;\mathbb R)$ such that $\xi_n\in\partial j(u_N^n)$ and 
\[
\langle\mathscr A u^n+ \mathscr B[u^n],v\rangle+\int_{\partial{\mathscr O}} \xi_n\, v_N(z)\,d\sigma(z)=\la f_n,v\ra,\quad \text{ for all } v\in\widetilde{\mathcal V}
\]
It is possible to prove a similar result as Lemma \ref{lem1} with eventually different constants. By the coerciveness of $\mathscr A$ and the continuity of the injection $L^2(\mathscr O;\mathbb R^d)\subset \mathcal V^*$, we get 
\[
M\,\|u^n\|_{\mathcal V}^2-ab\sigma(\partial\mathscr O)-\|f_n\|_{L^2(\mathscr O;\mathbb R^d)}\,\|u^n\|_{\mathcal V}\leq 0,
\]
which simplifies to
\[
\|u^n\|_{\mathcal V}\leq \frac{ab\sigma(\partial\mathscr O)}{\|u^n\|_{\mathcal V}}+\|f_n\|_{L^2(\mathcal V)}.
\]
One can see immediately that if $\|u^n\|_{\mathcal V}$ converges to $+\infty$, so will do $\|f_n\|_{L^2(\mathscr O;\mathbb R^d)}$, which means that $\{u^n\}_n$ is in fact bounded(with bound independent of $n$).  As $\mathcal V$ is a reflexive Banach space, we may assume, by passing to a subsequence if necessary, that there exists $u\in\mathcal V$ such that $u^n$ converges to $u$. From the continuity of $\mathscr A$ and $\mathscr B[.]$, we have $\mathscr A\,u^n\rightarrow \mathscr A\,u$ and $\mathscr B[u^n]\rightarrow\mathscr B[u]$ weakly in $\mathcal V^*$. Using the compactness of the trace operator $\gamma$, we may assume that $\gamma\,u^n\rightarrow \gamma\, u$ in $L^2(\mathscr O;\mathbb R^d)$ and then $\gamma\,u^n(z)\rightarrow \gamma\, u(z)$ for a.e. $z\in\partial\mathscr O$. Consequently, $u_N^n(z)\rightarrow u_N(z)$ for a.e. $z\in\partial\mathscr O$. On the other hand one have
\[
\int_{\partial{\mathscr O}} \xi_n\, v_N(z)\,{\rm d}\sigma(z)=\la f_n-\mathscr A u^n-\mathscr B[u^n],v\ra,\quad \text{ for all } v\in\widetilde{\mathcal V}
\]which means that $\xi_n\rightarrow \xi$ in $L^1(\partial\mathscr O)$ and 
\[
\langle\mathscr A u+ \mathscr B[u],v\rangle+\int_{\partial{\mathscr O}} \xi\, v_N(z)\,d\sigma(z)=\la f,v\ra,\quad \text{ for all } v\in\widetilde{\mathcal V},
\]
in order to complete the proof it will be shown that 
\[
\xi\in\partial j(u_N(z)) \text{ for a.e } z\in \partial\mathscr O.
\] 
To do this, we first show that  
\[
\partial j(u_N^n)\subset \partial j(u_N),\quad \text{for all }n .
\]
As $u_N^n\rightarrow u_N$ for a.e. $z\in\partial\mathscr O$, we can find, by Egoroff's theorem, that for any $\alpha > 0$ we can determine $\Gamma\subset \partial{\mathscr O}$   with  $\sigma(\Gamma) <\alpha$  such that
\begin{equation}
u_N^{n}\rightarrow u_N,\quad\text{ uniformly on } \partial{\mathscr O} \setminus \Gamma,
\end{equation} with $u_N\in \mathrm L^{\infty}(\partial{\mathscr O} \setminus \Gamma)$. Thus for any $\mu> 0$ there exists $n_0$ such that for all $n > n_0$ we have
\begin{equation*}
|u_N^{n}(z)- u_N(z)|<\frac{\mu}{2},\quad\forall z\in \partial{\mathscr O}\setminus\Gamma.
\end{equation*}

By using triangle inequality, we have that 

\begin{equation*}
\begin{array}{ll}
\overline{\Theta}_{\frac{\mu}{2}}(u^{_n}_N)&= \displaystyle\esssup_{|u_N^{n}-  \xi|\leq \frac{\mu}{2}} \Theta(\xi)\\
&\leq \displaystyle\esssup_{|u_N-  \xi|\leq  \mu} \Theta(\xi)\\
&= \overline{\Theta}_{\mu}(u_N).
\end{array}
\end{equation*}
Analogously we prove the inequality

\begin{equation*}
 \underline{\Theta}_{\mu}(u_N)\leq \underline{\Theta}_{\frac{\mu}{2}}(u^n_N).
\end{equation*}
Taking the limit as $\mu\rightarrow 0^+$, we obtain that 
\[
\xi_n\in\widehat{\Theta}(u_N^n(z))\subset \widehat{\Theta}(u_N(z)), \quad \forall n\geq n_0,\, z\in\partial \mathscr O\setminus\Gamma,
\]
from which we conclude that 
\[
\xi(z)\in \overline{\mathrm{conv}}\,\, \widehat{\Theta}(u_N(z))=\widehat{\Theta}(u_N(z)),\quad\forall\,z\in \partial \mathscr O\setminus\Gamma,
\]where $\sigma(\Gamma) < \alpha $. For  $\alpha $ as small as possible, we obtain the result.

\end{proof}

We follow Mig\'{o}rski \cite{law2013note} and we let $\mathcal U=L^2(\mathscr O;\mathbb R^d)$ be the space of controls. For every $f\in\mathcal U$, we denote by $S(f)\subset\mathcal V\times L^1(\partial\mathscr O;\mathbb R)$ the solution set corresponding to $f$ of the problem \eqref{eqdef}. It is then clear that, by definition, $S(f)$ is nonempty for all $f\in\mathcal U$.

Let $\mathcal U_{ad}$ be a nonempty subset of $\mathcal U$ consisting of admissible controls. Let $\mathscr F:\mathcal U\times\mathcal V\times  L^1(\partial\mathscr O;\mathbb R)\rightarrow \mathbb R$ be the objective functional we want to minimize. The control problem reads as follows: Find a control $\hat f\in\mathcal U_{ad}$ and a state $(\hat u,\hat\xi)\in S(\hat f)$ such that
\begin{equation}\label{control}
\mathscr F(\hat f,\hat u,\hat\xi)=\inf\left\{\mathscr F(f,u,\xi):\,f\in\mathcal U_{ad},\, (u,\xi)\in S( f)  \right\}.
\end{equation}
A triple which solves \eqref{control} is called an optimal solution. The existence of such optimal control can be proved by using Theorem \ref{fn}. To do so, we need the following additional hypothesis:

\begin{itemize}
\item[$H(\mathcal U_{ad})$]$\quad \mathcal U_{ad}$ is a bounded and weakly closed subset of $\mathcal U$.
\end{itemize}

\begin{itemize}
\item[$H(\mathscr F)$] $\quad \mathscr F$ is lower semicontinuous with respect to $\mathcal U\times\mathcal V\times  L^1(\partial\mathscr O;\mathbb R)\rightarrow \mathbb R$ endowed with the weak topology.
\end{itemize}

\begin{theorem}
Assume that $H(\Theta)$, $H(\mathcal U_{ad})$ and $H(\mathscr F)$ are fulfilled. Then the problem \ref{control} has an optimal control.
\end{theorem}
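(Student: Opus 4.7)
The plan is to apply the classical direct method of the calculus of variations, with Theorem \ref{fn} playing the role of the key continuity/stability tool. I would start by setting $m := \inf\{\mathscr{F}(f,u,\xi) : f \in \mathcal{U}_{ad},\ (u,\xi) \in S(f)\}$. Since $S(f)$ is nonempty for every $f$ by Theorem \ref{existence} and $\mathcal{U}_{ad}$ is nonempty, one has $m < +\infty$. Assuming $m > -\infty$ (implicitly required for the problem to admit a minimizer), I would select a minimizing sequence $\{(f_n, u_n, \xi_n)\}_{n \in \mathbb{N}}$ with $f_n \in \mathcal{U}_{ad}$, $(u_n, \xi_n) \in S(f_n)$, and $\mathscr{F}(f_n, u_n, \xi_n) \to m$.

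Next, since $\mathcal{U}_{ad}$ is bounded in the reflexive Hilbert space $\mathcal{U} = L^2(\mathscr{O}; \mathbb{R}^d)$, the Banach--Alaoglu theorem yields a subsequence (not relabeled) with $f_n \rightharpoonup \hat{f}$ weakly in $\mathcal{U}$; the weak closedness in $H(\mathcal{U}_{ad})$ then gives $\hat{f} \in \mathcal{U}_{ad}$. I would then invoke Theorem \ref{fn} applied to this weakly convergent control sequence: it produces a further subsequence (again not relabeled) along which $u_n$ converges weakly to some $\hat{u}$ in $\mathcal{V}$, $\xi_n$ converges weakly to some $\hat{\xi}$ in $L^1(\partial\mathscr{O}; \mathbb{R})$, and the limit pair satisfies $(\hat{u}, \hat{\xi}) \in S(\hat{f})$. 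Thus $(\hat f,\hat u,\hat\xi)$ is a feasible triple for the optimal control problem.

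To conclude, I would combine the minimizing property with the lower semicontinuity hypothesis $H(\mathscr{F})$: the triple $(f_n, u_n, \xi_n)$ converges to $(\hat{f}, \hat{u}, \hat{\xi})$ in the product weak topology of $\mathcal{U} \times \mathcal{V} \times L^1(\partial\mathscr{O}; \mathbb{R})$, so
\[
\mathscr{F}(\hat{f}, \hat{u}, \hat{\xi}) \leq \liminf_{n \to \infty} \mathscr{F}(f_n, u_n, \xi_n) = m.
\]
The reverse inequality $\mathscr{F}(\hat{f}, \hat{u}, \hat{\xi}) \geq m$ holds by the definition of infimum since $(\hat{f}, \hat{u}, \hat{\xi})$ is admissible, so $\mathscr{F}(\hat{f}, \hat{u}, \hat{\xi}) = m$ and $(\hat{f}, \hat{u}, \hat{\xi})$ is an optimal solution.

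The main obstacle I anticipate is matching the modes of convergence delivered by Theorem \ref{fn} with those required by $H(\mathscr{F})$. In particular, one needs to confirm carefully that Theorem \ref{fn} yields weak (rather than merely distributional) convergence of $\xi_n$ in $L^1(\partial\mathscr{O};\mathbb{R})$ — which it does, via the Dunford--Pettis argument inherited from Proposition \ref{pro2} — and likewise that the weak topology on $\mathcal{U}$ used for the minimizing sequence is the same one referenced in $H(\mathscr{F})$. Beyond these topological bookkeeping issues, the argument is a routine direct-method scheme and no further analytic tools are needed.
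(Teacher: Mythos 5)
Your argument coincides with the paper's proof: both take a minimizing sequence, extract a weakly convergent subsequence of controls via boundedness of $\mathcal U_{ad}$ in the reflexive space $\mathcal U$ and its weak closedness, pass to the limit in the states via Theorem \ref{fn}, and conclude by the weak lower semicontinuity hypothesis $H(\mathscr F)$. The additional remarks you make (finiteness of the infimum, matching the convergence mode of $\xi_n$ with the topology in $H(\mathscr F)$) are reasonable points of care but do not change the route.
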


\begin{proof}
Let $(f_n,u_n,\xi_n)$ be a minimizing sequence for the problem \eqref{control}, i.e $f_n\in\mathcal U_{ad}$ and $(u_n,\xi_n)\in S(f_n)$ such that 
\[
\displaystyle\lim_{n\to\infty}\mathscr F(f_n,u_n,\xi_n)=\inf\left\{\mathscr F(f,u,\xi):\,f\in\mathcal U_{ad},\, (u,\xi)\in S( f) \right\}=:m
\]
It follows that the sequence $f_n$ belongs to a bounded subset of the reflexive Banach space $\mathcal V$. We may then assume that $f_n\rightarrow \hat f$ weakly in $\mathcal V$ (by passing to a subsequence if necessary). By $H(\mathcal U_{ad})$, we have $\hat f\in\mathcal U_{ad}$. From Theorem \ref{fn}, we obtain, by again passing to a subsequence if necessary, that $u_n\rightarrow \hat u$ weakly in $\mathcal V$ with $(\hat u,\hat\xi)\in S(\hat f)$. By $H(\mathscr F)$, we have $m\leq\mathscr F(\hat f,\hat u,\hat\xi)\leq \displaystyle\liminf_{n\to\infty}\mathscr F(f_n,u_n,\xi_n)=m$. Which completes the proof.
\end{proof}


\begin{acknowledgements}
We thank Prof. S. Mig\'{o}rski for pointing out that the Rauch and the growth conditions  are completely independent.
\end{acknowledgements}
 \section*{Conflict of interest}
The authors declare that they have no conflict of interest.

\end{document}